\newtheorem{theorem}{Theorem}
\newtheorem{theorema}{Theorem}
\newtheorem{theoremb}{Theorem}
\newtheorem{theoremc}{Theorem}
\newtheorem{theoremd}{Theorem}
\newtheorem{theoreme}{Theorem}
\newtheorem{dfn}[theoremb]{Definition}
\newtheorem{rk}[theoremc]{Remark}
\newtheorem{cor}[theoremd]{Corollary}
\newtheorem{lem}[theoreme]{Lemma}
\newenvironment{proof}[1][Proof]{\textbf{#1.} }{\qed \vspace{5pt}}
\newcommand\bib[1]{\bibitem[#1]{#1}}
\newcommand\abz{\hspace{13.5pt}}
\newcommand\qed{\phantom{\underline{y}}\hfill\hfill$\square$}
\newcommand{\comm}[1]{}
\newcommand\1{{\bf 1}}
\renewcommand\a{\alpha}
\renewcommand\b{\beta}
\newcommand\C{{\mathbb C}}
\newcommand\e{\varepsilon}
\newcommand\E{{\mathcal E}}
\newcommand\g{\mathfrak{g}}
\newcommand\h{\mathfrak{h}}
\renewcommand\l{\lambda}
\newcommand\La{\Lambda}
\newcommand\m{\mathfrak{m}}
\newcommand\oo{\omega}
\newcommand\op[1]{\mathop{\rm #1}\nolimits}
\newcommand\ot{\otimes}
\newcommand\p{\partial}
\newcommand\R{{\mathbb R}}
\newcommand\vp{\varphi}
\newcommand\we{\wedge}
\newcommand\x{\xi}
\newcommand\z{\sigma}
\newcommand\Z{{\mathbb Z}}
\renewcommand{\@evenhead}{\hfil Boris Kruglikov \hfil}
\renewcommand{\@oddhead}{\hfil Symmetries of  almost complex structures\hfil}
\renewcommand{\@begintheorem}[2]{\begin{trivlist}\it
 \item[\hspace{\labelsep}{\bf #1\ #2.}]}
\renewcommand{\@endtheorem}{\end{trivlist}}
\begin{document}

 \title{Symmetries of  almost complex structures \\ and pseudoholomorphic foliations}
 \author{Boris Kruglikov}
 \date{}
% \date{\small Inst.\ of Math.\ and Stat., Univ.\ of Troms\o,
% Troms\o\ 90-37, Norway.\\
% E-mails: \quad kruglikov@math.uit.no, \quad lychagin@math.uit.no.
% \vspace{2pt}}
 \maketitle

 \vspace{-14.5pt}
 \begin{abstract}
Contrary to complex structures, a generic almost complex structure $J$ has no local symmetries.
We give a criterion for finite-dimensionality of the pseudogroup $G$ of local symmetries for
a given almost complex structure $J$.
It will be indicated that a large symmetry pseudogroup (infinite-dimensional) is a signature of some
integrable structure, like a pseudoholomorphic foliation. We classify the sub-maximal
(from the viewpoint of the size of $G$) symmetric structures $J$.
Almost complex structures in dimensions 4 and 6 are discussed in greater details in this paper.%
 \footnote{MSC numbers: 32Q60, 53C15; 53A55, 22E65.\\ Keywords:
almost complex structure, symmetry, equivalence, differential invariant, Nijenhuis tensor,
pseudoholomorphic foliation.}%
 \end{abstract}

%%%%%%%%%%%%%%%%%%%%%%%%%%%%%%%%%%%%%%%%%%%%%%%%%%%%%%%%%%%%%%%%%%%%%%%%%%%%
%0%
\section*{Introduction and Results}

 \abz
Let $(M^{2n},J)$ be a connected smooth almost complex manifold, $J^2=-\bf1$. In this paper we focuss
mainly on local geometry and assume $n=\frac12\dim M>1$. In this case the Nijenhuis tensor
$N_J(\x,\eta)=[J\x,J\eta]-J[\x,J\eta]-J[J\x,\eta]-[\x,\eta]$
(which is a skew-symmetric (2,1)-tensor) is generically non-zero.

It is known that all differential invariants of $J$ can be expressed via the jets of the
Nijenhuis tensor \cite{K$_1$}.
In this paper we will be concerned with vector distributions canonically associated with $N_J$.
We will see that their behavior is related to the amount of local symmetries of the almost complex structure.

Of course, the most symmetric case corresponds to $N_J=0$, and this vanishing is equivalent to local integrability
of $J$ \cite{NN}. On the other pole we know that generically $J$ has no local symmetries at all \cite{K$_2$}.
What happens in between?

Globally, if the manifold $M$ is closed, the symmetry group $\op{Aut}(M,J)$ is a finite-dimensional
Lie group \cite{BKW}. For the pseudogroup of local symmetries $G=\op{Sym}_\text{\rm loc}(M,J)$ we
will prove the following criterion of finite-dimensionality.

 \begin{theorema}\label{thA}
Suppose that the almost complex structure $J$ is non-degenerate in the sense that the
Nijenhuis tensor as the map $N_J:\La^2TM\to TM$ is epimorphic for $n>2$ and that
its image $\Pi=\op{Im}(N_J)\subset TM$ is a non-integrable vector distribution for $n=2$.
Then $\dim G<\infty$.
 \end{theorema}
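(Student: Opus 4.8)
The plan is to manufacture, out of $J$ and finitely many covariant derivatives of the Nijenhuis tensor, a canonical absolute parallelism (an $\{e\}$-structure) on $M$ or on a finite prolongation bundle $\FF_k\to M$, and then to invoke the classical fact that the automorphism pseudogroup of an $\{e\}$-structure on a manifold $P$ is a Lie group of dimension at most $\dim P$ (an automorphism being determined by its $1$-jet, in fact by its value, at a single point). Since a symmetry $\vp$ of $J$ preserves $J$ and hence all the data out of which the parallelism is built, it lifts to an automorphism $\Phi$ of $\FF_k$ preserving the framing, and $\vp=\pi\circ\Phi$ recovers $\vp$ from $\Phi$; thus $G$ embeds into $\op{Aut}(\FF_k,\text{framing})$ and $\dim G\le\dim\FF_k<\infty$.

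To build the parallelism I would start from the $GL_n(\C)$-structure $\FF\to M$ of complex frames determined by $J$, which every symmetry preserves. The Nijenhuis tensor is a $GL_n(\C)$-equivariant map $N:\FF\to W$ into the ``linear Nijenhuis space'' $W=\op{Hom}(\La^2_{\bar{\C}}\C^n,\C^n)$, and symmetries intertwine it. Next fix an affine connection $\n=\n(J)$ depending naturally on $J$ alone — for instance a minimal $J$-linear connection whose torsion is proportional to $N_J$ (this choice is not canonical, but any functorial choice serves, and if one prefers one simply prolongs over the affine bundle of such connections); symmetries then preserve $\n$ and all covariant derivatives $\n^jN_J$. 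Recall from \cite{K$_1$} that the full jet of $N_J$ is a complete system of differential invariants, so it suffices to see that finitely many of the $\n^jN_J$ already pin down a framing. This is precisely where non-degeneracy enters: for $n>2$, epimorphicity of $N_J:\La^2TM\to TM$ makes the $GL_n(\C)$-orbit of $N(u)$ large, so that the isotropy subgroups of the truncated jets $(N(u),\n N(u),\dots,\n^jN(u))$ form a strictly shrinking chain of subgroups of $GL_n(\C)$; for $n=2$ one uses instead that $\Pi=\op{Im}(N_J)$ is a $J$-invariant rank $2$ distribution whose non-integrability forces the derived flag $\Pi\subset\Pi+[\Pi,\Pi]\subset\cdots$ to exhaust $TM$ after a bounded number of brackets, each step supplying an additional reduction of $\FF$.

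The heart of the proof — and the step I expect to be the main obstacle — is to show that this tower of reductions and prolongations \emph{terminates}, i.e. that the resulting reduced structure is of finite type: there is a finite $k$ for which the isotropy Lie algebra of $(J,N_J,\n N_J,\dots,\n^kN_J)$ at a point is trivial, equivalently any germ of a symmetry fixing the $k$-jet of $J$ at a point is the identity nearby. Making this rigorous requires controlling the Spencer/prolongation cohomology of the successive reductions and checking that, by epimorphicity (for $n>2$) respectively by non-integrability of $\Pi$ together with finiteness of the Tanaka prolongation of its symbol (for $n=2$, where $N_J$ can never be epimorphic for type reasons and the $(2,3,4)$-distribution $\Pi$ must be analysed separately), each genuinely new covariant derivative $\n^jN_J$ cuts the isotropy algebra strictly until it vanishes; one must also verify that the bound obtained does not depend on the non-canonical auxiliary choice of $J$-connection, e.g. by carrying the argument out on the affine connection bundle. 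Granting termination, $\FF_k$ carries a canonical framing and the conclusion $\dim G<\infty$ follows as in the first paragraph.
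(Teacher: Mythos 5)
Your outline defers exactly the point that constitutes the theorem. You name the termination of the reduction--prolongation tower as ``the main obstacle'' and then conclude only ``granting termination''; but finiteness of type is not a technicality to be granted --- it is the whole content of the statement, since the $GL_n(\C)$-structure defined by $J$ alone is of \emph{infinite} type (integrable $J$ has an infinite pseudogroup), so finiteness can only come from an actual computation showing how $N_J$ cuts the symbol. Moreover, the mechanism you propose for this step is not the right one and would fail as stated: you claim that non-degeneracy forces the isotropy groups of the truncated jets $(N_J,\n N_J,\dots,\n^jN_J)$ to shrink \emph{strictly until trivial}, yielding an e-structure on $M$ (or a canonical framing after finitely many reductions). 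The Calabi $G_2$-invariant structure on $S^6$ is non-degenerate and homogeneous with $8$-dimensional isotropy (acting as $\op{su}(3)$ on the tangent space), so the stabilizer of the full jet of $(J,N_J)$ at a point never becomes trivial, and no amount of covariant differentiation produces a parallelism on $M$ itself. What actually makes $G$ finite-dimensional is not triviality of the isotropy $\g_1$ but vanishing of its prolongation: for $h\in\g_2\subset S^2T^*\ot_\C T$ the defining relation $N_J(h(\xi,\eta),\zeta)+N_J(\eta,h(\xi,\zeta))=h(\xi,N_J(\eta,\zeta))$, after substituting $J\xi$ and using $\C$-antilinearity of $N_J$, forces $h(W,\cdot)=0$ with $W=\op{Im}(N_J)$; when $W=T$ (the case $n>2$) this gives $\g_2=0$, hence finite type and $\dim G\le\dim\g_0+\dim\g_1<2(n^2+n)$. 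This two-line symbol computation on the Lie equation of $(J,N_J)$ is what your proposal is missing; invoking ``control of Spencer/prolongation cohomology'' restates the problem rather than solving it, and the worry about the auxiliary connection disappears once one works, as the paper does, with the symbol of the Lie equation rather than with covariant derivatives.

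For $n=2$ your plan is much closer to what is actually done: there the paper builds an explicit e-structure $\x_1\in\Pi^2$, $\x_2=J\x_1$, $\x_3\in\p\Pi^2$ normalized by $N_J(\x_1,\x_3)=\x_1$ and $[\x_1,\x_2]=\x_3$, $\x_4=J\x_3$ (using precisely the non-integrability of $\Pi^2$, i.e.\ $\op{rank}\p\Pi^2=3$), and then quotes Kobayashi's bound $\dim G\le\dim M=4$. So that half of your strategy is sound, though you would still have to exhibit the normalizations rather than assert that ``each step supplies an additional reduction.'' For $n>2$, however, the argument you sketch does not go through without replacing the ``shrinking isotropy'' mechanism by the vanishing-prolongation computation above.
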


In general the pseudogroup of local symmetries $G=\op{Sym}_\text{\rm loc}(M,J)$ can
be infinite-dimensional even in the almost complex case.
By the Cartan test \cite{BCG$^3$,KL$_2$,Se} we can calculate the functional dimension and rank
of $G$, i.e. determine that its elements formally depend on $\z$ functions of $p$ arguments
(and some number of functions with fewer arguments).
For instance, complex local biholomorphic maps depend on $n$ holomorphic functions of $n$ arguments.

In the other (non-integrable) cases we will show that the number of arguments $p\le n-1$,
and we will study the structures that maximize $\z$. We will prove
(this shall be understood either in analytic category or formally):
 \begin{theorema}\label{thB}
If $N_J\ne0$, then the local transformations from the pseudogroup $G$ depend on
at most $\z$ functions of $(n-1)$ arguments, where $\z=n-1$ for $n=2,3$ and $\z=n-2$ for $n>3$.
 \end{theorema}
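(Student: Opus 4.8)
The plan is to reduce the statement to the Lie equation for infinitesimal symmetries, put it in complex form, and then run the Cartan test on its involutive prolongation, using $N_J\ne0$ to shrink the characteristic variety and the top Cartan character. First, by the Lie--Cartan correspondence the functional dimension and rank of $G$ coincide with those of the sheaf of solutions $X$ of the linear equation $L_XJ=0$; since $N_J\not\equiv0$ and $M$ is connected, in the analytic (or formal) setting it suffices to work near a point where $N_J\ne0$. Writing $X=W+\overline W$ with $W$ the $(1,0)$-component, the condition $L_XJ=0$ is equivalent to $X$ preserving $T^{1,0}$, which unravels to the first-order system
\begin{equation*}
\bar\p_J W \;=\; c\,N_J(\overline W,\ \cdot\ ),
\end{equation*}
where $\bar\p_J\colon\Gamma(T^{1,0})\to\Gamma(\La^{0,1}\ot T^{1,0})$ is the natural operator whose kernel, for $N_J=0$, consists of holomorphic vector fields, $c$ is a universal constant, and the right-hand side is $N_J$ read as a bundle map $T^{0,1}\to\La^{0,1}\ot T^{1,0}$ applied to $\overline W$. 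For $N_J=0$ this is $\bar\p_J W=0$, recovering the complex case ($\z=n$ holomorphic functions of $p=n$ arguments); everything hinges on the effect of the source term.

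The symbol of the system in $W$ is that of $\bar\p_J$, with (complexified) characteristic variety $\mathbb P\La^{1,0}\cup\mathbb P\La^{0,1}$ of dimension $n-1$, which alone would only yield $p\le n$. One must therefore prolong. Applying $\bar\p_J$ to the equation gives the compatibility condition $\bar\p_J^2W=\bar\p_J\bigl(c\,N_J(\overline W,\cdot)\bigr)$; since $\bar\p_J^2$ is curvature, hence an algebraic operator built from $N_J$ and $\n N_J$, and the right-hand side — after eliminating the derivatives of $\overline W$ via the conjugate equation — is likewise algebraic in $(W,\overline W)$ up to the occurrence of $\bar\p_J N_J$, one obtains a genuine new constraint coupling $W$ with $\overline W$, present precisely because $N_J\ne0$. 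Carried to the first involutive prolongation, this constraint forces the characteristic covectors into a proper subvariety of $\mathbb P\La^{1,0}\cup\mathbb P\La^{0,1}$ of dimension $\le n-2$, so that solutions depend on functions of at most $n-1$ arguments; equivalently, every canonical distribution built from $N_J$ (all of which are $G$-invariant) is exploited here to drop one argument relative to the integrable case.

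It then remains to bound $\z$, the $(n-1)$-st Cartan character of the involutive prolongation. Its symbol spaces are the sub-$\mathfrak{gl}_n(\C)$-modules of the holomorphic jet modules $S^k_{\C}(\C^n)^*\ot_{\C}\C^n$ cut out by the $N_J$-compatibility above; computing their Cartan characters in a coframe partially adapted to $N_J$ gives $\z=n-1$ for $n=2,3$. For $n>3$ the module $\La^{0,2}(\C^n)^*\ot\C^n$ carrying $N_J$ becomes large enough ($\dim\La^{0,2}=1,3,6,\dots$) that one further independent compatibility relation survives at the top level, lowering the character to $\z=n-2$; the jump at $n=3$ is exactly where this occurs.

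The main obstacle is the content of the last two steps: showing that the $N_J$-source genuinely obstructs the prolongation rather than being absorbable after a change of unknown, controlling the Spencer $\delta$-cohomology of the cut-down symbol so as to guarantee involutivity at a finite order, and performing the case analysis $n=2$, $n=3$, $n\ge4$ — including the strata on which $N_J$ degenerates, where one must still secure $\z\le n-2$.
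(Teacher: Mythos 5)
Your overall frame --- pass to the Lie equation for infinitesimal symmetries, write it in complex form with $N_J$ appearing as a zero-order coupling of $W$ and $\overline W$, and run the Cartan test on the prolonged symbols --- is the same as the paper's (\S\ref{S1} and \S\ref{S8}). But the two steps that carry all the content are asserted rather than proved, and for the second your proposed mechanism is not the right one. The claim that the compatibility condition ``forces the characteristic covectors into a proper subvariety of dimension $\le n-2$'' is exactly the point at issue: a first-order compatibility relation does not by itself cut down the characteristic variety of the prolonged symbols, and you never identify which covectors survive. The paper does this by passing to the prolongation-projection $\mathfrak{Lie}(J,N_J)$ and computing the second symbol: the identity $N_J(h(\xi,\eta),\zeta)+N_J(\eta,h(\xi,\zeta))=h(\xi,N_J(\eta,\zeta))$, evaluated on $J\xi$ and combined with anti-linearity of $N_J$, forces $h(W,\cdot)=0$ for $W=\op{Im}(N_J)$ and $h(\xi,\eta)\in W^\perp$, whence $\g_k\subset S^k\op{Ann}(W)\ot_\C W^\perp$, $\op{Char}^\C_\text{aff}\subset(\op{Ann}W)^{1,0}_\C\cup(\op{Ann}W)^{0,1}_\C$, and $K_\varrho=\{v:N_J(v,\op{Ann}\varrho)=0\}\subsetneq T$; this is what yields $p\le n-1$ and $\tfrac12\z\le n-1$. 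Nothing in your argument identifies $W$ as the subspace responsible for the drop, so the first reduction is a gap, not a step.

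The refinement $\z=n-2$ for $n>3$ likewise does not come from ``$\La^{0,2}$ becoming large enough that one further compatibility relation survives''; that heuristic neither produces the exact value nor explains why $n=3$ is exceptional. In the paper the point is that the functional dimension can reach $n-1$ only when $\dim_\C\op{Im}(N_J)=1$ --- a degenerate stratum, precisely the situation you propose to defer, yet it is where the maximum is attained. There $N_J$ is in effect a complex 2-form $\oo$ of rank $2m$, and for a characteristic $\varrho$ the kernel $K_\varrho$ is the skew-orthogonal complement of $\op{Ann}(\varrho)$, of complex dimension $n-2m$ or $n-2m+1$; maximizing over the possibilities gives $\tfrac12\z\le n-2$ for $n\ge4$, while for $n=3$ the case $\op{Im}(N_J)\subset\op{Ker}(N_J)$ gives $2=n-1$, which is exactly the source of the jump at $n=3$. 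Since you list precisely these computations (the genuine shrinking of the characteristic variety, the involutivity control, and the degenerate strata) among your ``main obstacles,'' the proposal is a plan in the right spirit rather than a proof.
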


For the sub-maximal symmetric structures, which realize the above dimensional characteristics,
some vector distributions, associated to $J$, will be shown integrable.
Thus $M$ comes naturally equipped with pseudoholomorphic foliations.
This feature is lacking generally, namely $J$-holomorphic foliations of complex dimension $>1$
are non-existent for generic $J$ \cite{Gr,K$_2$}; they do exist if the fibers are holomorphic curves,
but generically $J$ is not locally projectible along the fibers, which is though the case with
the above canonical foliations.

In the case $n=2$ the sub-maximal symmetric structure is locally unique, and is the only nonintegrable $J$
that admits an infinite-dimensional transitive symmetry pseudogroup $G$.
It corresponds to almost holomorphic vector bundle over a holomorphic curve.
We also investigate the geometry of the canonical rank 2 Nijenhuis tensor
characteristic distribution $\Pi^2=\op{Im}(N_J)$. It is shown that locally this rank 2
vector distribution can be arbitrary, but it has to be integrable if $\dim G=\infty$.
On the contrary, globally in dimension 4 the existence of an almost complex structure $J$
with $N_J\ne0$ (so that the field of 2-planes $\Pi^2$ is non-singular)
imposes topological restrictions on $M$.

In the case $n=3$ we describe the sub-maximal symmetric structures, which are again locally unique.
We discuss the almost complex structures $J$ with non-degenerate Nijenhuis tensors $N_J$
(then $J$ is called non-degenerate too).
Again existence of non-degenerate structures is a global topological restriction on $M$.
For non-degenerate $J$ we prove that $\dim G$ at most 14 and the maximal symmetric non-degenerate
almost complex structure is locally isomorphic to either the $G_2^c$-invariant structure $J$ on $S^6$
or to its split version $G_2^n$-invariant structure $J$ on $S^{2,4}\simeq S^2\times\R^4$
 % This uniqueness resembles one in the geometry of the Hilbert-Cartan equation
 % (symmetry being the split real form of the Lie group $G_2$), but the method of proof is quite different
(see Section \ref{S4} and appendices for details).

Thus we can refine information about $\dim G$ for $n=2,3$.
Estimate for $n=2$ was known earlier \cite{KiL}, but our proof is independent and is much shorter
(the first 3 sentences in the proof of Theorem \ref{Thm2}).
We formulate the global version of the result for the group of automorphisms.

 \begin{theorema}\label{thC}
Let the almost complex structure $J$ on $M^{2n}$ be non-degenerate in the sense of Theorem \ref{thA}.
If $n=2$, then $\dim\op{Aut}(M,J)\le\dim G\le4$ and the equality $\dim\op{Aut}(M,J)=4$ is attained
only for left-invariant almost complex structures on a Lie group $M^4$.
If $n=3$, then $\dim\op{Aut}(M,J)\le\dim G\le14$ and the equality
$\dim\op{Aut}(M,J)=14$ is attained only when $M=S^6$ and $J$ is $G_2^c$-invariant
or $M=S^{2,4}$ and $J$ is $G_2^n$-invariant.
 \end{theorema}

The analysis for $n>3$ is harder due to rapidly increasing number of cases, but
we describe the sub-maximal symmetric structures that are no longer unique.
Namely the two sub-maximal structures $J$ in dimensions 4 and 6, naturally extended to
dimension $2n$, give rise to two different sub-maximal almost complex structures
(with the same bulk of local symmetries depending upon $(n-2)$ complex functions of 2 variables).
We also discuss at the end of the paper the topological obstructions for construction of
global non-degenerate almost complex structures $J$.

The theorems announced this Introduction are cumulative results from the main text:
Theorems \ref{thA} and \ref{thB} are combinations of Theorems \ref{Thm1}, \ref{Thm2}, \ref{trm7}
and Theorem \ref{thC} is a combination of Theorems \ref{Thm2}, \ref{trm5} and Corollary \ref{Cor1}.

The zoo of all almost complex structures with infinite pseudogroup $G$ is too vast
to be handled in general, but our results show that these cases contain some integrability
features, like existence of pseudoholomorphic foliations.
Thus, similar to non-degenerate situation, we also observe topological obstructions for existence of
highly symmetric almost complex structures globally.

Summarizing, this paper makes the first steps in understanding the pseudogroup
$G$ of local symmetries (automorphisms) of almost complex structures $J$
that lie between the two poles -- integrable and generic.

%%%%%%%%%%%%%%%%%%%%%%%%%%%%%%%%%%%%%%%%%%%%%%%%%%%%%%%%%%%%%%%%%%%%%%%%%%%%
%1/2%
\section{Background: Formal integrability}\label{S1/2}

 \abz
Let us summarize here the basics from the geometric theory of differential equations
that we need, see \cite{Sp,KL$_2$} for details. All structures used in this paper
are assumed smooth unless otherwise stated.

We consider a vector bundle $\pi:E\to M$ with fiber $F$ and the space $J^k\pi$ of $k$-jets of
its local sections. Notice that $J^0\pi=E$ and there are natural projections $\pi_{k,k-1}:J^k\pi\to J^{k-1}\pi$.

A system of differential equations of order $r$ is a subbundle $\E\subset J^r\pi$.
In this paper the order of $\E$ will always be $r=1$.
The \emph{symbol} of this equation is the family of subspaces $\g_1=\op{Ker}(d\pi_{1,0}:T\E\to TJ^0\pi)$.
It naturally identifies as the subbundle $\g_1\subset T^*M\ot F$.

The Spencer-Sternberg prolongations of the symbol $\g_{k+1}=\g_1^{(k)}$ \cite{Sp,St} are given
by the formula $\g_{k+1}=S^{k+1}T^*M\ot F\cap S^kT^*M\ot\g_1$.

Prolongations of $\E$ are defined as subsets $\E_{k+1}=\E^{(k)}\subset J^{k+1}\pi$,
which are zero loci of the differential corollaries of the PDEs defining $\E$ (obtained
by differentiating the defining relations by all variables $\le k$ times).

Regularity of $\E$ means that $\E_k$ are vector subbundles of $J^k\pi$ with respect to
the projections $\pi_{k,k-1}$ for all $k$.
$\E$ is \textit{formally integrable} (all compatibility conditions hold) if and only if
$\pi_{k,k-1}:\E_k\to\E_{k-1}$ are submersions.

The symbols of $k$-th order $g_k=\op{Ker}(d\pi_{k,k-1}:T\E_k\to T\E_{k-1})\subset S^kT^*M\ot F$
are subspaces of the Spencer-Sternberg prolongations $\g_k$,
and we have equality $g_k=\g_k$ iff $\E$ is formally integrable.

Thus purely algebraic considerations allow to bound the Hilbert polynomial
 $$
P_\E(k)=\sum_{i\le k}\dim g_i=\z k^p+\dots.
 $$
The degree $p$ is called the \emph{functional dimension} and the leading coefficient $\z$ is called
the \emph{functional rank} of the system $\E$ \cite{KL$_1$}. According to Cartan test \cite{BCG$^3$}
these quantities determine the amount of initial data needed to specify formal\footnote{We can
change this to 'local' in the analytic and sometimes also in the smooth category.} solutions of $\E$.
Namely the solutions depend on $\z$ functions of $p$ arguments (and possibly some functions with
fewer arguments).

These numbers can be calculated from the complexification $\g_1^\C$ of the symbol as follows
(we restrict to the order $r=1$, see \cite{BCG$^3$,KL$_1$} for the general case).
Consider rank one elements $\varrho\ot v\in \g_1^\C\setminus0$.
The set of all such $\varrho$ form the complex affine \emph{characteristic variety}
$\op{Char}^\C_\text{aff}(\E)\subset T^*M$, and the set of all $v$ form the \emph{kernel bundle} $K$ over it.
We can also calculate these through prolongations (higher symbols) as follows:
$\varrho^k\ot v\in \g_k^\C\setminus0$.

Now $p=\dim_\C\op{Char}^\C_\text{aff}(\E)$ and $\z=\op{rank}(K)\cdot\op{deg}\op{Char}^\C_\text{aff}(\E)$
provided the characteristic variety is irreducible.
If the characteristic variety is reducible, then $p$ is the maximal dimension among the irreducible
components, and $\z$ is the sum of the corresponding quantities for irreducible pieces of dimension $p$.

 \smallskip

{\bf Example.} Consider the Cauchy-Riemann equation for the holomorphic map $w:\C^n\to\C^m$,
$z\mapsto w(z)$. It is given as the system $\p_{\bar z_k}w^j=0$, where $2\p_{\bar z_k}=\p_{x_k}+i\p_{y_k}$.
The covector $\xi=(\xi_1'+i\xi_1'',\dots,\xi_n'+i\xi_n'')$ is characteristic
iff the $2\times 2n$ real matrix with blocks $\begin{pmatrix} \xi_k' & \xi_k'' \\ -\xi_k'' & \xi'_k\end{pmatrix}$
has rank 1 (the actual matrix of the Cauchy-Riemann operator has size $2m\times 2n$, but its rows have
repetitions). Complexification of these equations writes as
$(\xi_1'',\dots,\xi_n'')=\pm i(\xi_1',\dots,\xi_n')$.
Thus $\op{Char}^\C_\text{aff}(\E)$ is reducible and consists of 2 planes
of dimensions $p=n$, and $K$ is a bundle of rank $n$ over each of them. Thus $\z=2n$.

We conclude that the solution of the Cauchy-Riemann equation depends on $2n$ (real-valued)
functions of $n$ arguments. In fact, it depends on $n$ holomorphic functions of
$n$ complex arguments.

\smallskip

In formal calculus of dimensions developed below we can interpret paramet\-rization of the
solution space via solutions of a (formally integrable) non-linear Cauchy-Riemann equation
$\bar\p(f)=\Phi(z,f)$ (instead of holomorphic functions); the space of such $f$ is
isomorphic to the space of complex-analytic functions via the initial value problem.
But for simplicity we will just mention that the solutions depend on $\frac12\z$ complex
functions of $p$ (complex) arguments.

%%%%%%%%%%%%%%%%%%%%%%%%%%%%%%%%%%%%%%%%%%%%%%%%%%%%%%%%%%%%%%%%%%%%%%%%%%%%
%1%
\section{Finiteness for almost complex automorphisms}\label{S1}

 \abz
Contrary to the global situation (when closedness, Kobayashi hyperbolicity or
other conditions guarantee the group of symmetries to be small), locally
$(M,J)$ can have an infinite-dimensional Lie algebra sheaf of symmetries.
We begin with an effective criterion to check finite-dimensionality.

For every $x\in M$ let us endow the tangent space $T=T_xM$ with the structure
of module over $\C$ by the rule
$(\a+i\b)\cdot v=\a\cdot v+\b\cdot Jv$ for $\a,\b\in\R$, $v\in T$.
The Nijenhuis tensor is then an element of the bundle $\La^2T^*\ot_{\bar\C}T$
(the wedge product is over $\C$).
The image of the Nijenhuis tensor is the $\C$-linear subspace $\op{Im}(N_J)=N_J(T\we T)\subset T$.

We call the Nijenhuis tensor \emph{non-degenerate} if its rank as a map $N_J:\La^2_\C T\to T$ is maximal.
This means $N_J\ne0$ for $n=2$ and $\op{Im}(N_J)=T$ for $n>2$.

 \begin{theorem}\label{Thm1}
{\rm(i)} If $n>2$ and $N_J$ is non-degenerate, then the the pseudogroup
$G=\op{Sym}_\text{\rm loc}(M,J)$ is finite-dimensional (and so is a Lie group).\\
{\rm(ii)}
If $N_J\ne0$, then the pseudogroup $G$ depends on at most $n-1$ complex functions
in $n-1$ variables.
 \end{theorem}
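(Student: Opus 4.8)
The plan is to realize $G=\op{Sym}_\text{loc}(M,J)$ as the (pseudo)group of local solutions of a first-order PDE system $\E\subset J^1\pi$, where $\pi:E=TM\to M$ is the tangent bundle, and then to apply the formal-integrability machinery of Section~\ref{S1/2} to this $\E$. Concretely, a vector field $X$ is an infinitesimal symmetry iff $L_XJ=0$; this is a first-order linear PDE on $X$, whose symbol $\g_1\subset T^*M\ot TM$ at a point $x$ consists of those $A\in\op{Hom}(T,T)$ for which the zeroth-order obstruction vanishes --- and here is the crucial input --- whose \emph{prolongations} $\g_2,\g_3,\dots$ are controlled by the Nijenhuis tensor. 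The key algebraic fact I would establish is that differentiating $L_XJ=0$ once produces a relation of the form (schematically) $N_J(A\cdot,\cdot)+N_J(\cdot,A\cdot)-A\cdot N_J(\cdot,\cdot)+(\text{first derivatives of }A)=0$, so that the second prolongation symbol $\g_2$ lies in the kernel of a linear map built from $N_J$. I would then show that $\varrho\ot v$ being a rank-one element of $\g_1^\C$ forces $\varrho$ (via the higher-order conditions $\varrho^k\ot v\in\g_k^\C$) to annihilate $\op{Im}(N_J)$ paired appropriately, i.e. to lie in a proper subspace cut out by non-degeneracy of $N_J$.

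For part (i): if $n>2$ and $\op{Im}(N_J)=T$, I would argue that the characteristic variety $\op{Char}^\C_\text{aff}(\E)$ is empty (equivalently, $\g_k^\C=0$ for $k$ large, so the complexified symbol is of finite type). The mechanism is that a characteristic covector $\varrho$ with kernel vector $v$ must satisfy, for all $\eta$, $\varrho(N_J^\C(v,\eta))\cdot(\text{something})=0$ together with further second-prolongation constraints; surjectivity of $N_J$ then pins down $\varrho$ to lie in a proper subspace of $T_x^*M\ot\C$ at every step, and iterating the prolongation conditions drives the symbol to zero after finitely many prolongations. Once the symbol is of finite type, $G$ is finite-dimensional and --- since $J$ is real-analytic wherever these computations are nonsingular, or by the general theory for formally integrable systems of finite type --- it is a Lie group. (Citing \cite{K$_1$} that all invariants come from jets of $N_J$ justifies working with the prolongation tower of $L_XJ=0$.)

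For part (ii): when $N_J\ne0$ but is possibly degenerate, the same analysis shows that any characteristic covector $\varrho$ must annihilate a nonzero subspace of $T$ determined by $\op{Im}(N_J)$ --- at least a complex line, by $N_J\ne0$ --- hence $\op{Char}^\C_\text{aff}(\E)$ is contained in the annihilator of that subspace, which has complex dimension at most $n-1$. Therefore $p=\dim_\C\op{Char}^\C_\text{aff}(\E)\le n-1$, and by the Cartan-test formulas recalled in Section~\ref{S1/2} (with the complex reformulation at the end of that section, counting $\tfrac12\z$ complex functions of $p$ complex arguments) the pseudogroup depends on at most $n-1$ complex functions of $n-1$ variables. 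The main obstacle I anticipate is the bookkeeping in the prolongation computation: one must compute $\g_2$ (and enough of $\g_3$) for $L_XJ=0$ explicitly in terms of $N_J$ and its covariant derivative, keep careful track of the $\C$-(anti)linearity in the $\bar\C$-module structure on $T$, and verify that the rank-one ``$\varrho^k\ot v$'' conditions genuinely force $\varrho$ into the annihilator of $\op{Im}(N_J)$ rather than some larger set; handling the case $n=2$ in part (ii), where $N_J$ automatically has rank one so the bound $p\le n-1=1$ is sharp, requires a separate short argument that $\Pi=\op{Im}(N_J)$ being merely a line still kills one characteristic direction.
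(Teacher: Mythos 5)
Your overall route is the same as the paper's: work with the Lie equation $L_XJ=0$, observe that prolongation brings in the lower-order condition $L_XN_J=0$ (the paper packages this as the prolongation--projection $\E=\pi_1(\mathfrak{Lie}(J)^{(1)})$, i.e.\ the Lie equation of the pair $(J,N_J)$), and then locate the characteristic variety inside $(\op{Ann}W)_\C^{1,0}\cup(\op{Ann}W)_\C^{0,1}$ with $W=\op{Im}(N_J)$. Two remarks on part (i): the new equation is genuinely of first order in $X$ --- the second-derivative terms cancel, which is exactly why the projected symbol gets cut down --- whereas your schematic relation still carries a ``first derivatives of $A$'' term; and no iteration over prolongations is needed, since substituting $J\xi$ into the defining relation of $\g_2$ and using the $\C$-anti-linearity of $N_J$ gives $h(W,\cdot)=0$ in one stroke, hence $\g_2=0$ when $W=T$ and the system is of finite type.

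The genuine gap is in part (ii). Bounding $p=\dim_\C\op{Char}^\C_\text{aff}(\E)\le n-1$ controls only the number of arguments; the statement also bounds the number of functions, i.e.\ the functional rank $\z=\op{rank}(K)\cdot\op{deg}\op{Char}^\C_\text{aff}(\E)$, and your proposal never estimates the kernel bundle $K$. A priori $K_\varrho$ could be large (even all of $T$), in which case the symmetries would depend on far more than $n-1$ functions of $n-1$ variables, so the conclusion you draw from the Cartan test does not follow from the dimension bound alone. The paper closes this by taking $h=\varrho^2\ot v\in\g_2^\C$ and substituting $\eta\in\op{Ann}(\varrho)$, $\xi=\zeta\notin\op{Ann}(\varrho)$ into the relation $N_J(h(\xi,\eta),\zeta)=N_J(h(\xi,\zeta),\eta)$, which yields $K_\varrho=\{v\in T:N_J(v,\op{Ann}(\varrho))=0\}$; this subspace is $J$-invariant and proper (were it all of $T$, vanishing of $N_J$ on $T\we\op{Ann}(\varrho)$ with $\op{Ann}(\varrho)$ a hyperplane would force $N_J=0$), so $\dim_\C K_\varrho\le n-1$ and $\tfrac12\z\le n-1$. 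You need this (or an equivalent) kernel computation to complete (ii); once it is in place, note also that $n=2$ requires no separate argument --- the same reasoning applies verbatim with $W$ a complex line.
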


 % Let us remark that for $M=\C^n$ the pseudogroup $G$ depends on $n$ holomorphic functions of $n$
 % arguments.

 \begin{proof}
{\rm (i)}
Let us consider the Lie equation on the 1-jets of infinitesimal symmetries $X\in\mathfrak{D}_M$
(space of vector fields on $M$) at various points $x\in M$ preserving the structure $J$:
 $$
\mathfrak{Lie}(J)=\{[X]^1_x:L_X(J)_x=0\}\subset J^1(TM).
 $$
Its symbol is $T^*\ot_\C T$ and this equation is formally integrable iff $J$ is integrable.
So we consider the first {\it prolongation-projection\/} of this equation $\E=\pi_1(\mathfrak{Lie}(J)^{(1)})$,
which is the Lie equation for the pair $(J,N_J)$ consisting of 1-jets of vector fields
preserving both tensors. The symbol of the equation $\E$ is
 $$
\g_1=\{f\in T^*\ot_\C T: N_J(f\xi,\eta)+N_J(\xi,f\eta)=fN_J(\xi,\eta)\,\forall\xi,\eta\in T\}.
 $$
The Spencer-Sternberg prolongation $\g_1^{(1)}$ of this space
equals (the symmetric tensor product $S^k$ everywhere below is over $\C$):
 $$
\g_2=\{h\in S^2T^*\ot_\C T: N_J(h(\xi,\eta),\zeta)+N_J(\eta,h(\xi,\zeta))
=h(\xi,N_J(\eta,\zeta))\}.
 $$
We can substitute $J\xi$ instead of $\xi$ in the defining relation for $h$. Then
anti-linearity of $N_J$ implies $h(W,\cdot)=0$ for $W=\op{Im}(N_J)$.
If $N_J$ is non-degenerate, this implies that $\g_2=0$.

The same holds in a more general case, when $W^\perp=\{v\in T:N_J(v,W)=0\}$
is zero. Indeed, the defining relation yields
 \begin{equation}\label{NJJ}
N_J(h(\xi,\eta),\zeta)=N_J(h(\xi,\zeta),\eta),
 \end{equation}
so taking $\zeta\in W$ we get $h(\xi,\eta)\in W^\perp$.

When $\g_2=0$, the system $\mathfrak{Lie}(J)$ has finite type
\cite{Sp,ALV,KL$_2$}. Consequently $G$ is a Lie group \cite{Ko},
and $\dim G=\dim\g_0+\dim\g_1<2(n^2+n)$.
 % we suggest the better bound $\dim G\le2(n^2-n+1)$.

\smallskip

{\rm (ii)} In the general case let us observe that the first part of the proof
implies that the higher prolongations satisfy ($k>1$)
 \begin{equation*}
\g_k\subset S^k\op{Ann}(W)\ot_\C W^\perp,
 \end{equation*}
where $\op{Ann}(W)=\{\varrho\in T^*:\varrho(W)=0\}$ is the annihilator.
Indeed, for $k=2$ this follows from the analysis of $\g_2$, and
the claim for $k>2$ follows by prolongation.
 % To include $k=1$ consider $f=\varrho\ot v\in\g_1^\C$.
 % Taking first $\xi,\eta\in\op{Ann}(\varrho)$ in the defining relation for $\g_1$
 % we obtain $N_J(\La^2\op{Ann}\varrho)\subset\op{Ann}\varrho$,
 % which using the anti-linearity property of $N_J$ implies $\varrho(W)=0$.
 % Next choosing $\xi\in\op{Ann}(\varrho)$ but $\varrho(\eta)=1$ we get $v\in(\op{Ann}\varrho)^\perp$.

Thus the characteristic variety of the Lie equation $\E=\mathfrak{Lie}(J,N_J)$ (and so also of
$\mathfrak{Lie}(J)$) satisfies
 $$
\op{Char}^\C_\text{aff}(\E)\subset(\op{Ann}W)_\C^{1,0}\cup(\op{Ann}W)_\C^{0,1}\subset{}^\C T^*
 $$
and so the maximal value of $p$ is $n-1$.

To calculate the kernel space $K_\varrho$ over $\varrho\in\op{Char}^\C_\text{aff}(\E)$ consider
$h=\varrho^2\ot v\in\g_2^\C$, $v\in K_\varrho$. Setting $\eta\in\op{Ann}(\varrho)=\{u\in T:\varrho(u)=0\}$,
$\xi=\zeta\not\in\op{Ann}(\varrho)$ into (\ref{NJJ}) we obtain:
$h(\xi,\eta)=0$ $\Rightarrow$ $N_J(v,\eta)=0$. Therefore
 $$
K_\varrho=\{v\in T:N_J(v,\op{Ann}(\varrho))=0\}\subset W^\perp.
 $$
This space is $J$-invariant and is strictly smaller than $T$. Indeed since
$\op{Ann}(\varrho)$ is a hypersurface, equality $\op{Ann}(\varrho)^\perp=T$ would imply
$N_J=0$.
% Thus the maximal value of $\frac12\z$ is $(n-1)$.
Therefore $\frac12\z\le(n-1)$.
 \end{proof}

% In \cite{K$_2$} it is explained how to check that $G$ is not only finite-dimensional, but actually
% finite. However the criterion is more complicated.

%  \begin{rk}
% If the equality in the above inclusion for the characteristic variety is achieved, we have
% $p\in\op{Char}(\g)\Leftrightarrow p|_W=0$. By the integrability of characteristics we conclude that $W$
% is an integrable $J$-invariant proper distribution, and so it gives rise to a pseudoholomorphic foliation.
%  \end{rk}

For $n=2$ the image of the Nijenhuis tensor never spans $T$, and a finer criterion is
provided in the next section.

%%%%%%%%%%%%%%%%%%%%%%%%%%%%%%%%%%%%%%%%%%%%%%%%%%%%%%%%%%%%%%%%%%%%%%%%%%%%
%2%
\section{Almost complex structures in dimension 4}\label{S2}

 \abz
In dimension $2n=4$ the induced $\op{GL}(2,\C)$ action on the space $\La^2T^*\ot_{\bar\C}T$ of
Nijenhuis tensors has exactly two orbits: zero and its complement.
For non-zero tensor $N_J$ its image as a map $\La^2T\to T$ is a rank 2 distribution in $M^4$ (\cite{K$_2$}).

 \begin{dfn}
$\Pi^2=\op{Im}(N_J)\subset T$ is called the Nijenhuis tensor characteristic distribution.
 \end{dfn}
Provided $J$ is generic, $\Pi^2$ is generic as well (see \S\ref{S4}). In particular, there is the derived
rank 3 distribution $\Pi^3=\p\Pi^2$ formed by the brackets of sections of $\Pi$.

This leads to the invariant e-structure $\{\xi_i\}_{i=1}^4$, $\xi_i\in\mathfrak{D}_M$, as follows:
 $$
\x_1\in C^\infty(\Pi^2),\ \x_3\in C^\infty(\Pi^3),\ N_J(\x_1,\x_3)=\x_1,\ \x_2=J\x_1,\ [\x_1,\x_2]=\x_3,\ \x_4=J\x_3.
 $$
These formulae define the pair $(\xi_1,\x_2)$ up to multiplication by $\pm1$ and the pair $(\x_3,\x_4)$ absolutely canonically \cite{K$_3$}.

 \begin{theorem}\label{Thm2}
If $\p\Pi^2$ is a rank 3 distribution, then the group $G$ of local symmetries is at most
4-dimensional. Moreover $G$ is finite-dimensional unless $\Pi$ is integrable and $J$ is
projectible along the fibers of the corresponding foliation\footnote{Denoting the (local)
projection along fibers by $\pi$, we get that $J$ is projectible if $\pi_*J=J_0\pi_*$
for some almost complex structure $J_0$ on the quotient. Similarly, $N_J$ is projectible if
% $\pi_*N_J=N_0\Lambda^2\pi_*$
$\pi_*N_J=N_0\pi_*$ for some $(2,1)$-tensor $N_0$ on the quotient.}.

If $G$ is infinite-dimensional, then its orbits contain the leaves of $\Pi$,
and $G$ is formally parametrized by 1 complex function of 1 argument.
 \end{theorem}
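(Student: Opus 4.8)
The plan is to split the argument into the three assertions of the theorem and to exploit the invariant $e$-structure $\{\xi_i\}_{i=1}^4$ introduced above. Any local symmetry $\varphi\in G$ preserves $J$ and hence $N_J$, so it preserves $\Pi^2=\op{Im}(N_J)$ and, if $\partial\Pi^2$ has constant rank $3$, also $\Pi^3=\partial\Pi^2$. Consequently $\varphi$ permutes the vector fields $\xi_i$ according to the normalization relations: it fixes $\xi_3,\xi_4$ and sends $(\xi_1,\xi_2)\mapsto\pm(\xi_1,\xi_2)$. Therefore $\varphi$ preserves the absolute parallelism $\{\xi_i\}$ (up to a finite ambiguity), and a symmetry of an $e$-structure is determined by its $1$-jet at a single point; the isotropy at a point injects into the finite group $\{\pm1\}$ acting on $(\xi_1,\xi_2)$, while the structure functions of $\{\xi_i\}$ cut out the orbit. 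This gives $\dim G\le 4=\dim M$, the first assertion.

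For the finiteness dichotomy I would analyze when the $e$-structure above actually fails to exist, i.e. when the normalization $N_J(\xi_1,\xi_3)=\xi_1$, $[\xi_1,\xi_2]=\xi_3$ degenerates. By Theorem~\ref{Thm1}(ii) and the symbol computation in its proof, $\dim G=\infty$ forces $\Pi=\op{Im}(N_J)$ to be integrable (this is exactly the statement that the characteristic variety is non-empty, which forces $W^\perp\ne0$ hence $\partial\Pi^2$ cannot be generic in the relevant sense). So assume $\Pi^2$ is integrable with leaves of complex dimension $1$; these leaves are then $J$-holomorphic curves. The point is to show that if, in addition, $J$ is \emph{not} projectible along the fibers of this foliation, then the obstruction to projectibility is itself a nontrivial differential invariant that, together with $J$, reconstructs a canonical framing and forces $\dim G<\infty$; conversely, if $J$ \emph{is} projectible, the quotient carries an almost complex structure $J_0$ on a surface, hence integrable, and one gets an infinite-dimensional pseudogroup by pulling back the (infinite-dimensional) automorphism pseudogroup of a complex curve together with the fiberwise data. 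Concretely, in the projectible case $(M,J)$ is (locally) an almost-holomorphic line bundle over a holomorphic curve — this matches the sub-maximal model announced in the introduction for $n=2$ — and the Cartan test on the corresponding Lie equation yields functional dimension $p=1$ and $\frac12\z=1$, i.e. parametrization by one complex function of one argument; simultaneously the $G$-orbits are unions of the leaves of $\Pi$, since $\Pi$ is $G$-invariant and the pseudogroup already acts transitively along each leaf.

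The main obstacle I anticipate is the middle step: showing that integrability of $\Pi^2$ alone is \emph{not} enough for infinite-dimensionality, and isolating precisely the projectibility of $J$ (equivalently of $N_J$) as the extra condition. This requires a careful prolongation analysis of $\mathfrak{Lie}(J,N_J)$ in adapted coordinates where $\Pi^2=\langle\partial_{z},\partial_{\bar z}\rangle$ over the leaf direction: one must check that the non-projectible part of the $1$-jet of $J$ transverse to the leaves produces a non-constant relative invariant whose level sets are preserved by $G$, collapsing the symbol $\g_k$ to zero for large $k$. The reducibility/irreducibility bookkeeping of the characteristic variety from Section~\ref{S1/2} — here a single point or a pair of complex-conjugate points over each leaf direction — is what controls whether $\z/2=1$ or $\z=0$, and getting that dichotomy to coincide exactly with projectibility is the delicate part. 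Once that is in place, the remaining claims (the $4$-dimensional bound, orbits containing leaves, the count $1$ function of $1$ argument) follow formally from the $e$-structure and the Cartan test.
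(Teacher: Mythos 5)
Your first assertion is handled essentially as in the paper: the symmetry pseudogroup preserves the canonical e-structure $\{\xi_i\}$ up to the $\pm$ ambiguity, and Kobayashi's theorem bounds its dimension by $\dim M=4$ (minor point: an automorphism of an e-structure is already determined by its value at a point, not its $1$-jet; your version still gives the bound since the frame isotropy is finite). The trouble begins with the dichotomy. Your justification that $\dim G=\infty$ forces $\Pi$ integrable — "the characteristic variety is non-empty, which forces $W^\perp\ne0$ hence $\p\Pi^2$ cannot be generic" — is not a valid inference: the symbol and characteristic variety are pointwise algebraic data attached to $N_J$ and say nothing about the integrability of $\Pi$, which is a condition on first derivatives of $N_J$; for \emph{every} $J$ with $N_J\ne0$ in dimension $4$ one has $\g_k=S^k\op{Ann}(\Pi)\ot_\C\Pi\ne0$, integrable or not. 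The correct route is the one you already have in hand: non-integrable $\Pi$ gives the e-structure, hence $\dim G\le4<\infty$, so $\dim G=\infty$ forces integrability.

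The genuine gap is the central implication that $\dim G=\infty$ forces $J$ (not merely $N_J$) to be projectible along the leaves of $\Pi$. You explicitly defer this ("the delicate part") to an unspecified prolongation analysis producing a relative invariant, so the heart of the theorem is not proved. The paper's argument is short and different in kind: the quotient $\nu=T/\Pi$ carries a canonical Riemannian structure (the unit circle in $\nu_x$ being $\{v:N_J(v,\cdot)\in\op{SL}(\Pi_x)\}$), so the part of the symmetry algebra transverse to $\Pi$ is at most $3$-dimensional; hence if $\dim G=\infty$ one can choose a vector field $\xi$ tangent to $\Pi$ agreeing with a symmetry to second order at $x$, and the identity $[\xi,J\eta]=J[\xi,\eta]\ \op{mod}\Pi$, together with its $J\xi$-analogue, gives $L_X(J)(\eta)\equiv0\ \op{mod}\Pi$ for all $X\in\Pi$, i.e. projectibility; the same transverse-metric bound is what yields the orbit statement. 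Finally, your claimed converse ("if $J$ is projectible one gets an infinite-dimensional pseudogroup by pulling back the automorphisms of the curve") is false and is not part of the statement: projectibility is only necessary, and Theorem \ref{Thm3} shows that infinite-dimensionality requires the stronger almost holomorphic vector bundle structure — symmetries of the base need not lift, and fiberwise symmetries exist only after further restrictions on $J$. The functional count $p=1$, $\tfrac12\z=1$ you assert is correct but should be backed by the computation $\g_k=S^k\op{Ann}(\Pi)\ot_\C\Pi$ and the identification of the kernel bundle with the complex line $\Pi$, as in the paper.
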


 \begin{proof}
It is known \cite{Ko} that the automorphism group of an e-structure on a manifold $M$
is a Lie group of dimension at most $\dim M$, which is 4 in our case. The additional $\pm$
can only change the amount of connected components.
Thus in the case of non-integrable $\Pi$ we get $\dim G\le4$.

For general distribution $\Pi$ from the proof of Theorem \ref{Thm1} we have
 $$
\g_2=\{h\in S^2T^*\ot_\C T: N_J(h(\xi,\eta),\zeta)=N_J(h(\xi,\zeta),\eta),\ h(\xi,N_J(\eta,\zeta))=0\}.
 $$
These relations imply $h(\Pi,\cdot)=0$ and $h(\xi,\eta)\in\Pi$.
Thus $\g_2=S^2\op{Ann}(\Pi)\ot_\C\Pi$ and more generally $\g_k=S^k\op{Ann}(\Pi)\ot_\C\Pi$, $k>1$.
The characteristic variety equals
 $$
\op{Char}^\C_\text{aff}(\E)=(\op{Ann}\Pi)_\C^{1,0}\cup(\op{Ann}\Pi)_\C^{0,1}\subset{}^\C T^*.
 $$
The kernel bundle over $\op{Char}^\C_\text{aff}(\E)$ is the 1-dimensional complex line $\Pi$,
and this (together with the fact that the symmetries are real) implies the result on functional dimension.

Now let us note that $\nu=T/\Pi$ is a Riemannian bundle over $M$.
The orthogonality is given by $J$ and the unit circle $S^1\subset\nu_x$ ($x\in M$ is an arbitrary point)
consists of vectors $v\in\nu_x$
satisfying the condition $N_J(v,\cdot)\in\op{SL}(\Pi_x)$.

Let $\mathcal{O}$ be the orbit of the pseudogroup $G$ action through $x$ and $V=T_x\mathcal{O}$
the tangent space, which is obtained by evaluation of all infinitesimal symmetry fields at $x$.
If $V\cap\Pi=0$, then the pseudogroup is at most 3-dimensional as it preserves the
Riemannian metric on $\mathcal{O}$.

Consider the case $\dim V\cap\Pi\ne0$, and suppose $G$ is infinite-dimensional.
Then (as the transversal symmetries are at most 3-dimensional) we can take a vector field $\xi$
tangent to $V\cap\Pi$ which coincides with a symmetry at $x$ up to the second order of smallness.
Then for a vector field $\eta$ transversal to $\Pi$ we have at $x$:
$[\xi,J\eta]=J[\xi,\eta]\,\op{mod}\Pi$, which implies $[J\xi,J\eta]=J[J\xi,\eta]\,\op{mod}\Pi$ at $x$.
Thus $L_X(J)(\eta)=0\,\op{mod}\Pi$ $\forall X\in\Pi$, $\eta\in\nu$, so that $J$ is projectible along $\Pi$.
 \end{proof}

 \begin{rk}
If $J$ is projectible along the leaves of $\Pi$, then $N_J$ is also projectible.
The inverse is not true, see an example in \cite{K$_3$}.
 \end{rk}

The theorem holds under the regularity assumptions (in a neighborhood of a regular point),
when all the involved bundles have constant ranks. It does not obviously extend to singular points, because
there are examples of linear equations with sheaf of solutions being finite-dimensional
at every regular point, yet infinite-dimensional at some singular points \cite{K$_6$}.
The claim is however true globally in analytic category.
 % more generally the first part is true if at the flattening points $x$, where $\p\Pi^2_x$ is 2-dimensional,
 % the structure functions of the commutators in $\Pi$ are not infinitely flat).

%%%%%%%%%%%%%%%%%%%%%%%%%%%%%%%%%%%%%%%%%%%%%%%%%%%%%%%%%%%%%%%%%%%%%%%%%%%%
%3%
\section{Sub-maximal symmetric structures for $n=2$}\label{S3}
%\section{Sub-maximal symmetric almost complex structures in dimension 4}\label{S3}

 \abz
Now we describe when the Lie equation $\E=\mathfrak{Lie}(J,N_J)$ from \S\ref{S1}
is compatible (formally integrable) in the case $M$ is 4-dimensional.
 % In the next theorem we assume $(M,J)$ analytic.

In \cite{LS} {\it almost holomorphic vector bundles\/} $\pi:(E,\hat J)\to(\Sigma,J')$ were defined as such
pseudoholomorphic bundles that the fiber-wise addition $E\times_\Sigma E\to E$ is a
pseudoholomorphic map. In \cite{K$_4$} we gave a constructive version and related it
to the theory of normal pseudoholomorphic bundles.

 \begin{theorem}\label{Thm3}
Suppose $J$ is non-integrable everywhere, i.e. $N_J|_x\ne0$ $\forall x\in M$. Then
$G=\op{Sym}_\text{\rm loc}(M,J)$ is infinite-dimensional iff $J$ is (locally) isomorphic
to an almost holomorphic vector bundle structure
$(M^4,J)\to(\Sigma^2,J_0)$ over a Riemannian surface.

If $G$, in addition, acts transitively, then in local coordinates we have the normal form
 $$
J\p_z=i\,\p_z+w\,\p_{\bar w},\quad J\p_w=i\,\p_w.
 $$
 \end{theorem}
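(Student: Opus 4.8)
I would treat the two implications of the equivalence separately. For ``$\Leftarrow$'', recall from \cite{LS,K$_4$} that an almost holomorphic vector bundle $\pi:(M^4,J)\to(\Sigma^2,J_0)$ is locally a pseudoholomorphic bundle with one-dimensional complex fiber whose fiber-wise addition is pseudoholomorphic; spelling out the latter condition (additivity of the vertical part of $J\p_z$ along the fiber) shows that in adapted coordinates $(z,w)$, with $J_0\p_z=i\p_z$ on the base and $J\p_w=i\p_w$ along the fiber,
 $$
J\p_z=i\p_z+(a_1w+a_2\bar w)\,\p_w+(b_1w+b_2\bar w)\,\p_{\bar w},\qquad J\p_w=i\p_w,
 $$
with $a_j,b_j$ functions of $(z,\bar z)$ only, so the vertical part of $J$ is $\R$-linear in the fiber variable. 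A direct Nijenhuis computation of the kind performed in the proof of Theorem~\ref{Thm2} gives $\op{Im}(N_J)\subset\langle\p_w,\p_{\bar w}\rangle$; hence, under the hypothesis $N_J\ne0$ (which in dimension $4$ forces $\op{Im}(N_J)$ to have rank $2$), $\Pi=\op{Im}(N_J)=\langle\p_w,\p_{\bar w}\rangle$ is integrable, and using linearity one checks that $\E=\mathfrak{Lie}(J,N_J)$ is formally integrable. Since by Theorem~\ref{Thm2} its symbol prolongations are $\g_k=S^k\op{Ann}(\Pi)\ot_\C\Pi\ne0$ for all $k\ge1$, the Hilbert polynomial $P_\E$ has positive leading coefficient, so $\dim G=\infty$, with $G$ parametrized by one complex function of one argument as in Theorem~\ref{Thm2}.

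For ``$\Rightarrow$'', suppose $\dim G=\infty$. By Theorem~\ref{Thm2} this forces $\Pi=\Pi^2=\op{Im}(N_J)$ to be integrable and $J$ to be projectible along its leaves, whence a local pseudoholomorphic submersion $\pi:(M^4,J)\to(\Sigma^2,J_0)$ with $J$-holomorphic curve fibers. As $\dim_\R\Sigma=2$, $J_0$ is integrable; choosing a $J_0$-holomorphic coordinate $z$ on $\Sigma$ and a fiberwise $J$-holomorphic coordinate $w$, projectibility gives $J\p_w=i\p_w$, $J\p_z=i\p_z+a\,\p_w+b\,\p_{\bar w}$ for functions $a,b$ of $(z,\bar z,w,\bar w)$. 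Moreover $\dim G=\infty$ forces $\E=\mathfrak{Lie}(J,N_J)$, or a finite prolongation--projection of it, to be formally integrable: otherwise finitely many prolongation--projections would reduce it to an equation of finite type (the functional rank of a symmetry equation with symbol prolongations $S^k\op{Ann}(\Pi)\ot_\C\Pi$ being $2$ or $0$ by Theorem~\ref{Thm2}), contradicting $\dim G=\infty$.

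The heart of the argument is to write the compatibility conditions of $\E$ in the coordinates $(z,w)$ as differential relations on $a,b$ and to integrate them. These I expect to amount, after an adapted change of coordinates---holomorphic in $w$ along the fibers (which absorbs the fiber-constant terms, i.e.\ fixes a zero section), together with a reparametrization of the base---to the $\R$-linearity of $a$ and $b$ in the fiber variable (concretely, to the vanishing of second fiber-derivatives such as $\p_w^2b$), i.e.\ to $J$ being an almost holomorphic vector bundle; this shows that $(M^4,J)$ is locally such a structure. Carrying out this verification and integration for the overdetermined system $\mathfrak{Lie}(J,N_J)$ is the step I expect to be the main obstacle. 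A more conceptual substitute is the remark that, the kernel bundle over $\op{Char}^\C_\text{aff}(\E)$ being $\Pi$, the infinite part of $G$ acts by symmetries vertical for $\pi$, and a line-bundle-type fibration carrying an infinite-dimensional pseudogroup of vertical automorphisms must be affine-linear, i.e.\ an almost holomorphic vector bundle.

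Finally, assume in addition that $G$ acts transitively. Then the functional moduli $a_j,b_j(z,\bar z)$ of the normal form are constrained, since every scalar differential invariant built from $N_J$ and from the canonical Riemannian structure on $\nu=T/\Pi$ from the proof of Theorem~\ref{Thm2} must be constant on $M$. Using the residual gauge freedom $z\mapsto\phi(z)$ ($\phi$ holomorphic), $w\mapsto\gamma(z)\,w$ ($\gamma$ holomorphic, nowhere zero) and a fiber translation, one normalizes $a\equiv0$ and $b\equiv w$; the hypothesis $N_J\ne0$ everywhere ensures the leading coefficient is nowhere zero, so the normalization holds throughout the neighborhood. The resulting structure $J\p_z=i\p_z+w\,\p_{\bar w}$, $J\p_w=i\p_w$ is the asserted normal form, and a direct check confirms that it has $\op{Im}(N_J)=\langle\p_w,\p_{\bar w}\rangle\ne0$, is an almost holomorphic vector bundle over $(\C,i)$, and has an infinite-dimensional transitive symmetry pseudogroup.
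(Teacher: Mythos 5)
There is a genuine gap, and you have in fact flagged it yourself: the step you call ``the main obstacle'' is exactly the content of the theorem. The paper's proof of the direction $\dim G=\infty\Rightarrow$ almost holomorphic vector bundle proceeds by encoding $J$ in the Cauchy--Riemann-type equation $w_{\bar z}=\psi$ (with $\psi=\tfrac12 i\bar b$, see (\ref{e-1})), observing that symmetries of $J$ are point symmetries of this equation, and then running the Lie symmetry method on the overdetermined system (\ref{e0})--(\ref{e1}): the successive compatibility conditions (\ref{e2})--(\ref{e6}) force $|\psi_{\bar w}|$ to be constant in $w$ and the phase $\theta$ to be harmonic, and only after a further normalization of the fiber coordinate (using an existing fiberwise symmetry without stationary points, and treating the two cases $\a=0$ and $\a\ne0$ separately) does one conclude that $\psi$ is $\R$-linear in $w,\bar w$, i.e.\ that $J$ is an almost holomorphic vector bundle. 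Your ``conceptual substitute'' --- that a fibration by curves carrying an infinite-dimensional pseudogroup of vertical automorphisms must be affine-linear --- is precisely this assertion restated, not an argument for it; without the compatibility analysis (or an equivalent) nothing prevents, a priori, a nonlinear $\psi$ with infinitely many vertical symmetries, and ruling that out is where all the work lies. (Your $\Leftarrow$ sketch is essentially fine, though ``using linearity one checks that $\E$ is formally integrable'' is easier to replace by the paper's explicit symmetries: shifts $w\mapsto w+f_0$ by solutions of the linear Cauchy--Riemann equation, which visibly depend on one function of one argument.)

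The transitive case has a second gap of the same kind. After the reduction to the normal form (\ref{wzl}), $w_{\bar z}=\l(z,\bar z)\bar w$, the residual gauge freedom $z\mapsto Z(z)$, $w\mapsto U(z)w$ does \emph{not} allow you to normalize $b\equiv w$: the quadratic form $\Lambda=(\log\l)_{z\bar z}\,dz\,d\bar z$ is a gauge invariant and obstructs setting $\l=\op{const}$. Transitivity is used in the paper in an essential and non-formal way: it forces the canonical metric $Q=|\l|^4dz\,d\bar z$ on the leaf space to have a $3$-dimensional algebra of Killing fields, hence constant curvature $\epsilon$, and then a direct calculation shows that for $\epsilon\ne0$ two of the three Killing fields do \emph{not} lift to symmetries of $J$, so transitivity forces $\epsilon=0$, whence $\Lambda=0$ and only then can $\l$ be gauged to a constant. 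Your appeal to ``every scalar invariant is constant, then gauge freedom normalizes $b\equiv w$'' conflates these two steps and omits the curvature dichotomy and the lifting check, which is the part that actually singles out the stated model.
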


 % One approach to this statement is to check formal integrability of the equation $\mathfrak{Lie}(J,N_J)$
 % for the pseudogroup. We use instead the method of S.Lie to detect point symmetries of a differential equation.
 % \medskip

 \begin{proof}
Let $\dim G=\infty$.
From the necessary conditions of Theorem \ref{Thm2} we know that projection along the leaves
of the integrable distribution $\Pi$ gives us locally the pseudoholomorphic map
$\pi:(M,J)\to(\Sigma,J_0)$. Choosing a complex coordinate $z$ for the complex structure $J_0$
on the holomorphic curve $\Sigma$ and a complex coordinate $w$ along the fibers of $\pi$ we
get the local description of $J$
 \begin{equation}\label{intP}
J\p_z=i\,\p_z+a\,\p_w+b\,\p_{\bar w},\quad J\p_w=i\,\p_w
 \end{equation}
(action on $\p_{\bar z},\p_{\bar w}$ is obtained by conjugation due to reality of $J$;
cf. the real version of the above description in \cite{K$_4$}). The condition $J^2\p_z=-\p_z$ yields $a=0$.
Then we calculate $N_J(\p_z,\p_w)=-2i\,b_w\p_{\bar w}$, whence $\Pi=\op{Re}(\C\cdot\p_w)$ according
to our setup and $b_w\ne0$ by the assumption.

The equation for pseudoholomorphic curves in $M$ transversal to $\Pi$ writes
 %  $\E=\{w=F(z,\bar z),\bar w=\bar F(z,\bar z)\}$ so $b=2i\,{\bar F}_z$
 \begin{equation}\label{e-1}
w_{\bar z}=\psi,\quad \text{ where }\psi=\tfrac12\,i\,\bar b,
 \end{equation}
which we couple with the conjugate into
$\E=\{w_{\bar z}=\psi,{\bar w}_z=\bar\psi\}\subset J^1(\Sigma,M)_\C$
(the latter is the complexified space of jets of $\pi$).
The almost complex structure $J$ on $M$ reads off (up to $\pm$) from
the collection of all these $J$-holomorphic curves,
so the symmetries of $J$ coincide with the (real) symmetries of the equation $\E$.

Passing to vector fields, the Lie algebra sheaf of $G$ corresponds to the space of
infinitesimal point symmetries of $\E$, which we search by the method of S.Lie,
see \cite{KL$_2$} and references therein.
Since we know that most of the symmetries ($\infty$ with the possible exception of 3)
are tangent to $\Pi$ (fibers of $\pi$), we write the point symmetry as
$\xi=f\,\p_w+\bar f\,\p_{\bar w}$ (this removal of the translational part is
standard in the Lie symmetry method). The prolongation of $\xi$ to the space of 1-jets
(the doted quantities are not essential)
 $$
\hat\xi=f\,\p_w+\bar f\,\p_{\bar w}+(\dots)\p_{w_z}+(\dots)\p_{{\bar w}_z}
+(f_{\bar z}+w_{\bar z}f_w+{\bar w}_{\bar z}f_{\bar w})\p_{w_{\bar z}}+(\dots)\p_{{\bar w}_{\bar z}}
 $$
acts as follows (the second equation of $\E$ gives the conjugate relation)
 $$
\hat\xi(w_{\bar z}-\psi)|_\E=f_{\bar z}+\psi\,f_w+{\bar w}_{\bar z}f_{\bar w}-f\psi_w-\bar f\psi_{\bar w}=0.
 $$
Since ${\bar w}_{\bar z}$ is arbitrary, this obviously decouples into
 \begin{equation}\label{e0}
f_{\bar w}=0\quad\text{ and }
 \end{equation}
 \begin{equation}\label{e1}
f_{\bar z}+\psi\,f_w=f\psi_w+\bar f\psi_{\bar w}.
 \end{equation}
We will investigate this overdetermined system $\mathcal{S}_J$ by uncovering the
compatibility conditions. Differentiating (\ref{e1}) by $\bar w$ yields
 \begin{equation}\label{e2}
f_w-\overline{f_w}=(\log\psi_{\bar w})_wf+(\log\psi_{\bar w})_{\bar w}\bar f.
 \end{equation}
Taking the sum of (\ref{e2}) and its conjugate results in
 \begin{equation}\label{e3}
(\log|\psi_{\bar w}|^2)_wf+(\log|\psi_{\bar w}|^2)_{\bar w}\bar f=0.
 \end{equation}
If this relation is nontrivial, then $f=h\cdot f_0$, where $f_0$ is a fixed complex function
and $h$ a real indeterminate. Substitution of this into $\mathcal{S}_J$ gives a finite
type system, so the symmetry algebra is finite-dimensional.

Thus we must assume that both coefficients of (\ref{e3}) vanish, so that $|\psi_{\bar w}|=k=\op{const}$
by $w$. We write $\psi_{\bar w}=ke^{i\theta}$, where $\theta$ is a real variable (mod\,$2\pi$).
Differentiation of (\ref{e2}) by $w$ and by $\bar z$ gives:
 \begin{equation}\label{e4}
f_{ww}=(\log\psi_{\bar w})_wf_w+(\log\psi_{\bar w})_{ww}f+(\log\psi_{\bar w})_{w\bar w}\bar f.
 \end{equation}
 \begin{equation}\label{e5}
(\log{\bar\psi}_w)_{w\bar w}\overline{f_z}=\Psi(f,\bar f,f_w,\bar{f}_w),
 \end{equation}
where $\Psi$ is some linear function in the indicated arguments with coefficients depending
differentially on $\psi$. % we do not need their expression.
Differential corollaries (\ref{e4})-(\ref{e5})
together with the original equations (\ref{e0})-(\ref{e1}) form a finite type system unless
 \begin{equation}\label{e6}
(\log{\bar\psi}_w)_{w\bar w}=0\quad\Leftrightarrow\quad(\log\psi_{\bar w})_{w\bar w}=0.
 \end{equation}
So we obtain another restriction on $\psi$: $\theta$ is harmonic.

Now the choice of complex coordinate $w$ in the leaves of $\Pi$ was arbitrary, and we wish to
normalize it\footnote{A general fiber-wise diffeomorphism preserves the symmetry algebra
but changes the coordinate form of the Cauchy-Riemann equation (\ref{e-1}): linear
to nonlinear etc. An alternative way to the normalization is to find all restrictions on $\psi$ and
then to check linearization of the Cauchy-Riemann equation.}.
We can suppose that the infinite pseudogroup $G$ contains a fiber-wise symmetry with no stationary
points. Then we can fix it to be translation in each leaf, i.e. the generating vector field
is $q\p_w+\bar q\p_{\bar w}$, where $q=q(z,\bar z)$. In other words
we can assume (locally) that $\mathcal{S}_J$ has a nonzero solution $f$ with $f_w=0$.
Then derivatives of (\ref{e2}) imply that the system of vectors
 $$
(\theta_w,\theta_{\bar w}),\quad  (\theta_{ww},0)\quad (0,\theta_{\bar w\bar w})
 $$
has rank $\le1$. Together with (\ref{e6}) this implies $\theta=\theta_0+\a w+\bar\a\bar w$
with real $\theta_0$ and complex $\a$ functions of $z,\bar z$.
Moreover (\ref{e1}) yields $\psi=\vp_0(z,\bar z)+\vp_1(z,\bar z)w+\Phi_{-1}$, where
$\Phi_{-1}=\vp_{-1}(z,\bar z)\bar w$ for $\alpha=0$ and $\Phi_{-1}=\frac{k}{i\bar\a}e^{i\theta}$
else.

Let us consider the case $\a=0$ first.
Substitution of the expression for $\psi$ into (\ref{e4}) and (\ref{e2}) implies that $f_w$ is real
and constant by $w$, i.e. we have: $f=f_0(z,\bar z)+f_1(z,\bar z)w$ ($f_1\in\R$).
This together with $w$-derivative of (\ref{e1}) give us $f_{1\bar z}=0$, i.e. $f_1=c$ is a real constant.

Since the first term in $\psi=\vp_0(z,\bar z)+\vp_1(z,\bar z)w+\vp_{-1}(z,\bar z)\bar w$
can be eliminated by a shift in $w$, $J$ is an almost pseudoholomorphic vector bundle structure.
The general symmetry $f=f_0(z,\bar z)+cw$
is a combination of the scaling symmetry $\op{Re}(w\p_w)$ and the shift by solution
$\op{Re}(f_0(z,\bar z)\p_w)$, where $f_0$ satisfies the linear (nonhomogeneous)
Cauchy-Riemann equation
 $$
w_{\bar z}=-c\,\vp_0(z,\bar z)+\vp_1(z,\bar z)w+\vp_{-1}(z,\bar z)\bar w.
 $$
 % coming from substitution of $f$ and $\psi$ into (\ref{e1})
The solutions of the latter are parametrized by 1 function in 1 argument.

Via shift by a solution and variation of constants the linear Cauchy-Riemann equation takes
the normal form
 \begin{equation}\label{wzl}
w_{\bar z}=\l(z,\bar z)\bar w,
 \end{equation}
where $\l\ne0$ corresponds to $N_J\ne0$. The gauge changes preserving this type of equation
are $z\mapsto Z(z)$, $w\mapsto U(z)w$ (functions $Z,U$ are holomorphic),
and so $\Lambda=(\log\l)_{z\bar z}\,dz\,d\bar z$ is an invariant quadratic form.

Consider now the case $\a\ne0$. Here substitution of
$\psi=\vp_0(z,\bar z)+\vp_1(z,\bar z)w+\frac{k}{i\bar\a}e^{i\theta}$
in (\ref{e2}) gives $f_w=i\a f+r$, where $r=r(z,\bar z)$ is a real function. This implies
$f=\frac{i r}{\a}+c\,e^{i\a w}$, where $c=c(z,\bar z)$ is a complex function.
Substituting both this and the expression for $\psi$ into (\ref{e1}) yields
 $$
(\tfrac{i r}{\a})_{\bar z}+(c_{\bar z}+i c\,\a_{\bar z}w)\,e^{i\a w}+i\a(\vp_0+\vp_1w)c\,e^{i\a w}=
\tfrac{i r}{\a}\vp_1+c\vp_1e^{i\a w}+k\bar c\,e^{i\theta_0+i\a w}.
 $$
Splitting this equation according to $w$-parts we get $\vp_1=-\frac{\a_{\bar z}}{\a}$,
$r_{\bar z}=0$ (so that $r$ is a real constant) and a Cauchy-Riemann type equation on $c$.

Let us change the variable $w\mapsto \z\,e^{-i\a w}$, where $\z$ satisfies
$(\log\z)_{\bar z}=i\a\vp_0$. Then direct calculation shows that the equation $w_{\bar z}=\psi$
is transformed into the equation of type (\ref{wzl}), so we can proceed as in the case $\a=0$.

Notice that in addition to $\Lambda$ there is another canonical quadric (Riemannian metric)
$Q=|\l|^4dz\,d\bar z$ on $\Sigma=M/\Pi\simeq\C(z)$, discussed in the proof of Theorem~\ref{Thm2}
($T\Sigma=\nu$). The condition that the symmetry group $G$ acts transitively implies that the space
of (local) Killing fields for this Riemannian metric is 3-dimensional, and hence it is of constant curvature.
This implies that $|\l/\l_0|^2=(1+\epsilon\,z\bar z)^{-1}$ for real
constants $\epsilon,\lambda_0$, whence $\Lambda=\frac{-\epsilon}{2|\l_0|^4}Q$.

However for $\epsilon\ne0$ the motion group of $Q$ does not act by symmetries on $J$
(direct calculation shows that 2 out of 3 Killing fields fail to do so),
and so to achieve transitivity of $G$-action we must assume $\epsilon=0$.

In this case $\Lambda=0$, i.e. we can get $\l=\op{const}$ by a gauge transformation in (\ref{wzl}).
Thus we obtain the biggest sub-maximal symmetric model\footnote{The maximal symmetric model
is obviously $\C^n$ in any dimension $n$.},
i.e. the model with maximal symmetry group among all non-integrable structures $J$:
 % (but this is a variation of $\infty$ by 3):
the symmetry group $G$ is a semi-direct product of
the 3-dimensional group of motions $U(1)\ltimes\C$ of the plane $\C(z)$,
the scaling of the $w$-variable and the $\infty$-dimensional group of shifts by solutions
$w\mapsto w+f_0$ discussed above.
 \end{proof}

%%%%%%%%%%%%%%%%%%%%%%%%%%%%%%%%%%%%%%%%%%%%%%%%%%%%%%%%%%%%%%%%%%%%%%%%%%%%
%4%
\section{Nijenhuis tensor characteristic distribution}\label{S4}

It was shown in \cite{K$_3$} that rank 2 distributions $\Pi^2$ in $\R^4$ of fixed type
(Engel, quasi-contact, Frobenius) are realizable as the Nijenhuis tensor characteristic distribution.
Moreover if $\Pi^2$ is analytic, it is realizable as well: $\Pi=\op{Im}(N_J)$.

In this section we remove the assumption of analyticity.

 \begin{lem}\label{L1}
An almost complex structure in $M^4$ has the following normal form in local coordinates $(z,w)$
with some smooth complex-valued functions $\a,\b$.
 $$
J\p_z=i\sqrt{1+|\a|^2}\,\p_z+\a\,\p_{\bar z}+\frac{i\a\bar\b}{1+\sqrt{1+|\a|^2}}\,\p_w+\b\,\p_{\bar w},
\quad J\p_w=i\,\p_w.
 $$
 \end{lem}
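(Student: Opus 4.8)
The plan is to normalize a general almost complex structure $J$ on $M^4$ in two stages: first arrange that one of the complex coordinate directions is $J$-holomorphic (giving $J\p_w = i\p_w$), then solve the remaining constraints coming from $J^2=-\1$ and reality.

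\textbf{Step 1: producing a pseudoholomorphic coordinate.} The key input is the local existence of $J$-holomorphic curves (Nijenhuis--Woolf / Gromov; in dimension four these foliate a neighborhood of any point). First I would pick a nonzero local section of $T^{0,1}M$ and integrate the corresponding pseudoholomorphic foliation by $J$-holomorphic discs, choosing a complex coordinate $w$ along the leaves and a transversal complex coordinate $z$ so that the leaves are $\{z=\text{const}\}$. By construction $\p_w$ spans a $J$-invariant complex line (the tangent to a $J$-holomorphic curve), hence $J\p_w = i\p_w$ after a fiberwise rescaling of $w$, which does not disturb the foliation. This gives the second displayed equation and is really the only genuinely analytic (as opposed to algebraic) ingredient — removing the analyticity hypothesis of \cite{K$_3$} amounts precisely to invoking the smooth existence theory for pseudoholomorphic curves rather than the Cartan--K\"ahler machinery. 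This is the step I expect to be the main obstacle: one must check the foliation has smooth dependence on $z$ and that the rescaling of $w$ can be done smoothly, which is where the smooth (as opposed to analytic) regularity theory for the $\bar\p$-type equation defining pseudoholomorphic curves must be used carefully.

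\textbf{Step 2: solving the algebraic constraints.} With $J\p_w=i\p_w$ fixed, write the most general ansatz compatible with $J$ being a real endomorphism:
\begin{equation*}
J\p_z = \mu\,\p_z + \a\,\p_{\bar z} + \gamma\,\p_w + \b\,\p_{\bar w},
\end{equation*}
with the action on $\p_{\bar z},\p_{\bar w}$ obtained by conjugation. Imposing $J^2\p_z = -\p_z$ and collecting the $\p_z,\p_{\bar z},\p_w,\p_{\bar w}$ components yields: from the $\p_z$-component $\mu\bar\mu - |\a|^2 = -1$ together with $\mu + \bar\mu$ appearing, so $\mu$ is purely imaginary of the form $\mu = i\sqrt{1+|\a|^2}$ (the sign fixed by the orientation/compatibility of $w$ with the $J^{0,1}$ choice); from the $\p_{\bar z}$-component $\mu\a + \a\bar\mu = 0$, automatically satisfied; from the $\p_{\bar w}$-component one finds $\b$ is free; and from the $\p_w$-component $\mu\gamma + \gamma\, i + \b\bar\a = 0$, i.e. $\gamma\,(i + i\sqrt{1+|\a|^2}) = -\b\bar\a$. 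Since $J\p_{\bar w} = -i\p_{\bar w}$ the relevant eigenvalue on the $\p_w$-line is $i$, and solving gives $\gamma = \dfrac{i\a\bar\b}{1+\sqrt{1+|\a|^2}}$ after rationalizing $\dfrac{-\b\bar\a}{i(1+\sqrt{1+|\a|^2})}$ and using $|\a|^2 = (1+\sqrt{1+|\a|^2})(\sqrt{1+|\a|^2}-1)$. This reproduces the stated normal form, with $\a,\b$ the two free smooth complex functions.

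\textbf{Step 3: bookkeeping.} Finally I would record that the residual gauge freedom — biholomorphisms of the $(z,w)$-chart preserving the fibration and the normalization $J\p_w=i\p_w$ — is by holomorphic-type changes in $z$ and fiberwise changes in $w$, so the pair $(\a,\b)$ is not unique but the normal form itself holds about every point; this is all that is claimed. The verification in Step 2 is a routine but slightly delicate algebraic computation (the square-root denominator is the only subtlety), so I would present it compactly, emphasizing the $\p_w$-component equation where $\gamma$ is determined. No compatibility or PDE conditions on $\a,\b$ arise, exactly because $J^2=-\1$ is purely pointwise; the Nijenhuis tensor $N_J$ is then whatever it is, and subsequent sections analyze $\Pi^2=\op{Im}(N_J)$ in these coordinates.
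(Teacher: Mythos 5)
Your proposal follows essentially the same route as the paper: foliate a neighborhood by $J$-holomorphic discs (Nijenhuis--Woolf), take $w$ a holomorphic coordinate along the leaves and $z$ transversal so that $J\p_w=i\p_w$, and then reduce the lemma to the pointwise algebraic constraints of $J^2=-\1$, which force the coefficient of $\p_z$ to be $i\sqrt{1+|\a|^2}$ and determine the $\p_w$-coefficient with $\b$ free. Only two transcription slips to fix: the $\p_z$-component relation should read $\mu^2+|\a|^2=-1$ (which already forces $\mu$ purely imaginary, making the $\p_{\bar z}$-relation automatic), and the $\p_w$-component involves $\a\bar\b$, not $\b\bar\a$; with these corrections your computation reproduces the stated normal form exactly as in the paper.
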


 \begin{proof}
We can foliate a neighborhood $U\subset(M^4,J)$ by $J$-holomorphic discs $U=\cup_\tau B_\tau$ (see \cite{NW,M,AL})
and choose a transversal pseudoholomorphic disk $D\subset U$. Let's introduce a complex $J$-holomorphic
coordinate $z$ on $D$ and pull-back it to $U$ using the projection along the foliation $B_\tau$. On every
leaf $B_\tau$ we introduce a complex $J$-holomorphic coordinate $w$. This local coordinate system
$(z,w)$ on $U\simeq\C^2$ is not $J$-holomorphic; the almost complex structure $J$ writes in it:
 $$
J\p_z=a\,\p_z+\a\,\p_{\bar z}+b\,\p_w+\b\,\p_{\bar w},\quad J\p_w=i\,\p_w.
 $$
The condition $J^2=-\1$ is equivalent to
 $$
(a+\bar a)\a=0,\ a^2+|\a|^2=-1,\ (a+i)b+\a\bar\b=0,\ {\bar\a}b+({\bar a}+i)\bar\b=0.
 $$
If $\a=0$ then $a=\pm i$, so we can take the general solution $a=ik$, $k\in\R$,
$k^2=|\a|^2+1$. Without loss of generality we can assume $k\ge1$ (because $a\approx i$ near $D$),
then the last equation can be omitted and we compute $b$.
 \end{proof}

 \begin{rk}
The choice $\a=0$ corresponds to the normal form (\ref{intP}) of $J$
with integrable Nijenhuis tensor characteristic distribution. Another normal form
in complex dimension $n=2$ is considered in \cite{To}.
 \end{rk}

 \begin{lem}\label{L2}
The Nijenhuis tensor characteristic distribution is $\C$-generated by the vector $v={\Xi_+}\cdot\xi$, where
 \begin{gather*}
\xi=\p_z-i\frac{1+\sqrt{1+|\a|^2}}{\bar\a}\,\p_{\bar z}+\frac{\bar\b}{\bar\a}\,\p_w+
\left(\frac{i\b}{1+\sqrt{1+|\a|^2}}\frac{\Xi_-}{\Xi_+}-2i\frac{\b_w}{\Xi_+}\right)\p_{\bar w}\\
\text{ and }\quad \Xi_\pm=\frac{\a_w\bar\a+\a\bar\a_w}{2\sqrt{1+|\a|^2}}\pm\frac{\a_w\bar\a-\a\bar\a_w}2.
 \end{gather*}
At the points where $\a=0$ we have
 $$
v=-i\a_w\,\p_{\bar z}+\bar\b\a_w\,\p_w-2i\b_w\,\p_{\bar w}.
 $$
 \end{lem}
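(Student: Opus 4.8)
\textbf{Plan of proof for Lemma \ref{L2}.}
The starting point is the explicit normal form from Lemma \ref{L1}. The plan is to compute $N_J$ directly from its defining formula $N_J(\x,\eta)=[J\x,J\eta]-J[\x,J\eta]-J[J\x,\eta]-[\x,\eta]$ on a convenient pair of vector fields, extract a single spanning vector of $\Pi^2=\op{Im}(N_J)$, and then massage it into the stated form. Since $N_J$ is anti-$\C$-linear in each slot and $\Pi^2$ is a $\C$-line, it suffices to evaluate $N_J$ on one pair of $\R$-independent, $\C$-independent tangent vectors. The most economical choice is $\x=\p_z$ and $\eta=\p_w$: because $J\p_w=i\,\p_w$ is ``flat'', two of the four brackets collapse and one is left computing essentially $[J\p_z,\p_w]-[\p_z,J\p_w]$-type terms, i.e. only the $w$-derivatives of the coefficients $a(z,w),\a(z,w),b(z,w),\b(z,w)$ of $J\p_z$ enter. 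This is where the quantities $\a_w,\bar\a_w,\b_w$ (and hence $\Xi_\pm$) will appear.

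First I would fix notation $a=i\sqrt{1+|\a|^2}=:ik$, $b=\frac{i\a\bar\b}{1+k}$, so that $J\p_z=a\,\p_z+\a\,\p_{\bar z}+b\,\p_w+\b\,\p_{\bar w}$, and record $k_w=\frac{\a_w\bar\a+\a\bar\a_w}{2k}$, which is exactly the symmetric part appearing in $\Xi_\pm$; the antisymmetric part $\frac{\a_w\bar\a-\a\bar\a_w}{2}$ is $\op{Im}(\a\bar\a_w)$ up to a factor, and will come from differentiating $b$ and $\bar b$. Then I compute $N_J(\p_z,\p_w)$. Using $J\p_w=i\,\p_w$, $J\p_{\bar w}=-i\,\p_{\bar w}$, and $[\p_z,\p_w]=0$, the four terms reduce to
\begin{equation*}
N_J(\p_z,\p_w)=[J\p_z,i\p_w]-J[\p_z,i\p_w]-J[J\p_z,\p_w]-0
= -i\,(\p_w J\p_z) - (i\,J + iJ)\big|_{\text{on }\p_w J\p_z}\ \text{-type expression},
\end{equation*}
i.e. everything is governed by the single vector field $\p_w(J\p_z)=a_w\p_z+\a_w\p_{\bar z}+b_w\p_w+\b_w\p_{\bar w}$. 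Applying $J$ to this (again via Lemma \ref{L1}) and collecting, one gets a vector of the form $c_1\p_z+c_2\p_{\bar z}+c_3\p_w+c_4\p_{\bar w}$ with $c_i$ polynomial in $a_w,\a_w,b_w,\b_w$ and in the coefficients of $J$ themselves. To present the answer as a $\C$-multiple of the stated $\x$ (whose $\p_z$-coefficient is normalized to $1$), I divide through by $c_1$; the claim is that $c_1=-\Xi_+$ and that $c_2/c_1,c_3/c_1,c_4/c_1$ are exactly the coefficients written in the lemma. The degenerate formula at points where $\a=0$ is then obtained simply by setting $\a=0$ (so $k=1$, $b=0$, $\Xi_\pm=0$ individually but their combinations with $1/\bar\a$ must be handled as limits): there $N_J(\p_z,\p_w)=-i\p_w(J\p_z)-J\p_w(J\p_z)$ with $\p_w(J\p_z)=i\a_w\p_{\bar z}+b_w\p_w+\b_w\p_{\bar w}$ at such points (using $a_w=0$ when $a\equiv ik$ and $k_w=0$ at $\a=0$, and $b_w=\bar\b\a_w$ since $b=\frac{i\a\bar\b}{1+k}$ gives $b_w|_{\a=0}=\tfrac{i}{2}\bar\b\a_w$ — I would double-check this factor), yielding $v=-i\a_w\p_{\bar z}+\bar\b\a_w\p_w-2i\b_w\p_{\bar w}$ after clearing the overall constant.

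The main obstacle is purely computational bookkeeping: the expression for $J\p_z$ has four nontrivial coefficients involving a square root, so the Jacobi-bracket terms and the subsequent application of $J$ produce a sizable expression that must be simplified using the identities $a^2=-1-|\a|^2$, $(a+i)b=-\a\bar\b$, $k^2=1+|\a|^2$ and their $w$-derivatives; in particular one must verify that the $\p_z$-component miraculously factors as $-\Xi_+$ and that the square roots organize into the $\frac{1}{1+\sqrt{1+|\a|^2}}$ combinations rather than something messier. I expect the cleanest route is to never expand $k=\sqrt{1+|\a|^2}$ but to carry it as a symbol with the single relation $k^2=1+\a\bar\a$, differentiate that relation to get $k_w$, and substitute only at the very end; the anti-linearity of $N_J$ can also be used as a consistency check (e.g. comparing $N_J(\p_z,\p_w)$ with $-i\,N_J(\p_z,\p_w)=N_J(J\p_z,\p_w)$ evaluated independently). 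Once the $(z,w)$-coefficients are pinned down, matching them termwise against the displayed $\x$ and $\Xi_\pm$ finishes the proof, and continuity (both sides are smooth in $(z,w)$) justifies passing to the $\a=0$ locus as the stated limit.
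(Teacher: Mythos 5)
Your plan is exactly the paper's proof: the paper disposes of Lemma~\ref{L2} with the single remark that it is a straightforward calculation from the normal form of Lemma~\ref{L1}, and your reduction $N_J(\p_z,\p_w)=(J-i)\,\p_w(J\p_z)$ (using $J\p_w=i\p_w$ and $[\p_z,\p_w]=0$) is precisely the right way to organize that calculation, with the identities $k^2=1+\a\bar\a$, $k_w=\frac{\a_w\bar\a+\a\bar\a_w}{2k}$ doing the simplification as you predict. The only caveats are bookkeeping points you already flag yourself: carrying the computation through gives the $\p_z$-coefficient $+\Xi_+$ (not $-\Xi_+$), so in fact $v=N_J(\p_z,\p_w)$ on the nose with no normalization needed, and at $\a=0$ the same evaluation (with $a_w=0$, $b_w=\tfrac{i}{2}\bar\b\a_w$, and $\p_{\bar z}$-coefficient $\a_w$, not $i\a_w$) produces $-2i\a_w$ in front of $\p_{\bar z}$, so the factor in that displayed formula deserves the double-check you propose.
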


 \begin{proof}
This is a straightforward calculation. Notice that $\Xi_+\ne0$, $\a\ne0$ is a sufficient condition
for $\Pi=\op{Im}(N_J)$ to be a non-singular distribution.
 \end{proof}

Notice that the coordinate system $(z,w)$ from Lemma \ref{L1} is very special,
for instance the absolute value of the coefficient of $\p_{\bar z}$ in $\xi$
is bigger than that of $\p_z$.

 \begin{theorem}
Locally any smooth rank 2 distribution $\Pi$ is the Nijenhuis tensor characteristic distribution:
$\Pi=\op{Im}(N_J)$.
 \end{theorem}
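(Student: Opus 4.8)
The plan is to realize a prescribed rank 2 distribution $\Pi$ as $\op{Im}(N_J)$ by running the computation of Lemma~\ref{L2} in reverse: prescribe the coordinate data $(\a,\b)$ so that the vector $v$ spanning $\op{Im}(N_J)$ points in the right direction, then verify that the complementary direction $Jv$ is also correct. First I would fix a point $x_0$ and choose local coordinates adapted to $\Pi$: pick a vector field $X$ spanning a complex line that projects onto $\Pi$, and note that since we only need the $\C$-span $\Pi=\op{Re}(\C v)$, we have exactly the freedom of the two complex functions $\a,\b$ (four real functions) at our disposal to match the two real directions of $\Pi$ at each point. The normal form of Lemma~\ref{L1} already fixes a coordinate system $(z,w)$ (via a foliation by $J$-holomorphic discs) in which the structure is determined by $(\a,\b)$, and Lemma~\ref{L2} gives $v$ explicitly in terms of $(\a,\b,\a_w,\b_w)$.

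The key steps, in order: (1) Given an arbitrary smooth rank 2 distribution $\Pi$ near $x_0$, choose a coordinate system in which $\Pi$ is spanned by two explicit real vector fields; it suffices to treat the generic stratification and then argue by density/continuity, but actually since the problem is purely local and $\Pi$ is smooth we can just take $\Pi = \langle \p_u + p\,\p_t,\ \p_v + q\,\p_t \rangle$ type coordinates or similar. (2) Identify these coordinates with the $(z,w)=(x_1+ix_2, x_3+ix_4)$ of Lemma~\ref{L1} and write down what vector $v$ must be, up to complex scaling, to span $\Pi$; call this target $v_{\mathrm{tar}}$, a prescribed (complex-projective) direction field. (3) Solve the equation $v(\a,\b) \parallel v_{\mathrm{tar}}$ for $(\a,\b)$. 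Examining Lemma~\ref{L2}, the coefficient of $\p_{\bar w}$ in $\xi$ involves $\b_w$, so this is a first-order PDE system for $(\a,\b)$, not an algebraic one; the cleanest route is to first prescribe $\a$ freely (subject only to $\a\ne 0$ if one wants $\Pi$ non-singular, but the theorem allows arbitrary $\Pi$, so $\a=0$ is permitted where $\Pi$ has the "integrable-type" coefficient pattern), which pins down the $\p_{\bar z}$ and $\p_w$ components, and then solve the remaining first-order scalar equation for $\b$ to match the $\p_{\bar w}$ component. By the Cauchy-Kovalevskaya / standard existence theorem for such an underdetermined first-order equation (one can even reduce to an ODE along suitable curves), a solution exists in a neighborhood. (4) Check that the construction is consistent with $J^2=-\1$: this is automatic since any $(\a,\b)$ with $\a\ne0$ (or the degenerate branch) yields a genuine almost complex structure by Lemma~\ref{L1}, and that $\op{Im}(N_J)$ is exactly the real 2-plane $\op{Re}(\C v)=\Pi$ and not something larger — but in dimension 4 with $N_J\ne 0$ the image is automatically rank 2 (cited from \cite{K$_2$}), so we only need $v\ne 0$, i.e. $\Pi$ non-singular, handled by choosing $\a, \b$ with the relevant non-vanishing.

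The main obstacle I anticipate is step (3): matching the $\p_{\bar w}$-component, which in Lemma~\ref{L2} reads $\frac{i\b}{1+\sqrt{1+|\a|^2}}\frac{\Xi_-}{\Xi_+} - 2i\frac{\b_w}{\Xi_+}$ and genuinely involves the $w$-derivative of $\b$. So prescribing $\Pi$ amounts to a first-order PDE on $\b$ (with $\a$ already chosen), of the form $\b_w = F(z,w,\bar z,\bar w,\b)$ along each $w$-disc — this is solvable as an ODE in $w$ with parameters $(z,\bar z)$, yielding $\b$ smooth (Picard–Lindelöf, smoothly in parameters), which gives smooth dependence on all variables. A secondary subtlety is that at points where the target direction $v_{\mathrm{tar}}$ forces $\a = 0$, one must use the degenerate formula $v=-i\a_w\,\p_{\bar z}+\bar\b\a_w\,\p_w-2i\b_w\,\p_{\bar w}$, and here the $\p_{\bar z}$ and $\p_w$ coefficients are governed by $\a_w$, so one must arrange the first jet of $\a$ rather than its value — again a solvable first-order condition. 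One then assembles these local solutions and invokes smoothness to conclude, noting the result need not be unique and that the "generic $\Pi$ gives generic $J$" remark of \S\ref{S4} is recovered as a special case. I would also remark that the analytic case recovers the earlier result of \cite{K$_3$}, the present argument merely replacing Cauchy–Kovalevskaya for the $\b$-equation by the smooth ODE existence theorem.
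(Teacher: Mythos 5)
There is a genuine gap at your step (3), and it sits exactly where the content of the theorem lies. In Lemma \ref{L2} the $\p_w$-coefficient of $\xi$ is $\bar\b/\bar\a$, so prescribing $\a$ does \emph{not} pin down the $\p_w$-component: matching that component against $\Pi$ is itself an equation on $\b$, and it is algebraic, forcing $\b=-\a\bar A-i(1+\sqrt{1+|\a|^2})B$, i.e. relation (\ref{DN01}) (equivalently, $J$-invariance of $\Pi$). Once $\b$ is so determined there is no freedom left to "solve for $\b$" in the $\p_{\bar w}$-matching; that second condition becomes a constraint on $\a$, a priori of first order, and your scheme does not close: you face an overdetermined pair (one algebraic, one differential) for $(\a,\b)$. (Fixing a single target direction $v_{\rm tar}$ up to complex scaling makes matters worse -- that is three complex conditions on the two functions $\a,\b$; the correct requirement is only $\xi\,\op{mod}\Pi^\C=0$, two complex conditions.) The crux of the paper's proof is the observation that after substituting (\ref{DN01}) into the $\p_{\bar w}$-coefficient all derivatives of $\a$ cancel, so the remaining constraint is the purely algebraic relation $i\a\bar A_w-(1+\sqrt{1+|\a|^2})B_w=0$, solved pointwise by (\ref{DN02}) once the coordinates $(z,w)$ are chosen so that $|A_{\bar w}|>|B_w|>0$ and $\bar A_wB_{ww}\ne\bar A_{ww}B_w$ (which also gives $\Xi_+\ne0$). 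Your proposal misses this cancellation, which is the new idea of this section.

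The analytic device you invoke is also not available. Here $w$ is a complex coordinate, so $\b_w$ is a Wirtinger derivative; an equation $\b_w=F(z,\bar z,w,\bar w,\b)$ is a nonlinear Cauchy--Riemann ($\bar\p$-type) equation in two real variables, not an ODE along curves, and Picard--Lindel\"of does not apply. This is not a minor technicality: as the remark following the theorem stresses, the whole point is that the special coordinates make the construction purely algebraic, which is precisely what removes the analyticity hypothesis of \cite{K$_3$}; if one must solve a first-order PDE system for $(\a,\b)$ one is back in the Cauchy--Kovalevskaya setting. Likewise, your patching of local solutions across the locus $\a=0$ (arranging the first jet of $\a$, then assembling) is unjustified, since solutions are far from unique and nothing forces the local pieces to agree; in the paper's argument this issue never arises because $\a,\b$ are given on the whole chart by the closed-form expressions (\ref{DN02})--(\ref{DN01}).
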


 \begin{proof}
A rank 2 distribution can be written in the coordinate system $(z,w)$ as
 $$
\Pi^\C=\langle\p_z-A\p_w-B\p_{\bar w},\p_{\bar z}-\bar B\p_w-\bar A\p_{\bar w}\rangle
 $$
(to obtain real $\Pi$ one has to take the real and imaginary parts of the first vector).

In the notations of Lemma \ref{L2} we have:
 \begin{multline*}
\xi\,\op{mod}\Pi=\left(A-i\frac{1+\sqrt{1+|\a|^2}}{\bar\a}\bar B+\frac{\bar\b}{\bar\a}\right)\p_w\\
+\left(B-i\frac{1+\sqrt{1+|\a|^2}}{\bar\a}\bar A+
\frac{i\b}{1+\sqrt{1+|\a|^2}}\frac{\Xi_-}{\Xi_+}-2i\frac{\b_w}{\Xi_+}\right)\p_{\bar w}
 \end{multline*}
Vanishing of the $\p_w$-coefficient yields
 \begin{equation}\label{DN01}
\b=-\a\bar A-i(1+\sqrt{1+|\a|^2})B
 \end{equation}
(this is also obtainable from the condition of invariance of $\Pi$ with respect to the
almost complex structure $J$ from Lemma \ref{L1}).

Substituting this into the $\p_{\bar w}$-coefficient of $\xi\,\op{mod}\Pi$ we observe
that all derivatives of $\a$ cancel and the resulting relation is
 $$
i\a\bar A_w-(1+\sqrt{1+|\a|^2})B_w=0.
 $$
This is easily solvable provided $|A_{\bar w}|>|B_w|$:
 \begin{equation}\label{DN02}
\a=\frac{-2iA_{\bar w}B_w}{|A_{\bar w}|^2-|B_w|^2}.
 \end{equation}

Now we can construct $J$. The coordinate system $(z,w)$ is at our hands. We can choose it so that
the above inequality is satisfied (even integrable $\Pi$ can be written with non-constant $A,B$).
We calculate using (\ref{DN02})
 $$
\Xi_+=\frac{4A_{\bar w}\overline{B_w}(\bar A_wB_{ww}-\bar A_{ww}B_w)}{(|A_{\bar w}|^2-|B_w|^2)^2}.
 $$
Thus the condition $\Xi_+\ne0$ translates into the inequalities $|A_{\bar w}|>|B_w|>0$ and
$\bar A_wB_{ww}\ne\bar A_{ww}B_w$. We adapt these into the choice of $(z,w)$.

Then we define the complex-valued functions $\a,\b$ by (\ref{DN02})-(\ref{DN01}) and
the almost complex structure $J$ from Lemma \ref{L1} realizes $\Pi$ as
the Nijenhuis tensor characteristic distribution.
 \end{proof}

 \begin{rk}
It is surprising that the construction is purely algebraic. This is due to the choice of special coordinates.
In \cite{K$_3$} the problem was solved via a PDE system in Cauchy-Kovalevskaya form,
that is why analyticity was assumed.
 \end{rk}

%%%%%%%%%%%%%%%%%%%%%%%%%%%%%%%%%%%%%%%%%%%%%%%%%%%%%%%%%%%%%%%%%%%%%%%%%%%%
%4.5%
\section{Global obstructions for non-degeneracy}\label{S4.5}
Non-degeneracy of the Nijenhuis tensor is a generic condition locally, but globally
on a closed 4-manifold it is a topological constraint.

 \begin{dfn}
An almost complex structure $J$ on a 4-dimensional manifold $M$ is called
{\em non-degenerate\/} if $N_J\ne0$ everywhere.
 \end{dfn}

There are obstructions to existence of non-degenerate $N_J$, some
depending on the homotopy class of the almost complex structure $J$ and some purely topological.
Indeed, the Nijenhuis tensor determines an anti-linear isomorphism of the complex vector bundles
 $$
N_J:\La^2T\to\Pi\subset T.
 $$
This implies $c_1(\Pi)=-c_1(\La^2T)=-c_1(M)$. Denoting by $\Pi^\perp$ the normal
vector bundle equipped with the natural complex structure we have:
$c_1(M)=c_1(\Pi)+c_1(\Pi^\perp)$, $c_2(M)=c_1(\Pi)c_1(\Pi^\perp)$, whence
 $$
c_2(M)+2c_1(M)^2=0.
 $$
Since the Euler characteristic is $\chi=c_2(M)[M]$, the signature is
$\tau=\frac13p_1(M)[M]$, with $p_1(M)=c_1^2(M)-2c_2(M)$ (implying the classical Wu's
criterion for almost complex structures in dimension four:
$2\chi+3\tau=c_1^2$; see \cite{De}), and $c_1(M)^2=K\!\cdot\!K$ is the self-intersection
of the canonical class $K$ of $M$, we get the main topological restrictions
for existence of non-degenerate almost complex structure on $M$:
 $$
5\chi+6\tau=0,\quad \chi=-2\,K\!\!\cdot\!\!K.
 $$

 \begin{rk}
Reference \cite{HH} relates the existence of an almost complex structure on $M^4$ to
the existence of a rank 2 distribution. We see that in the case these two are compatible
(through $N_J$) we get stronger restrictions on $M$.
 \end{rk}

In addition there are divisibility constraints \cite{H}. Indeed by Max Noether theorem
$\chi+\tau\equiv0\,\op{mod}4$ (equivalently $c_1^2+c_2\equiv0\,\op{mod}12$),
so we must have
 $$
\chi\equiv0\,\op{mod}24.
 $$

These restrictions applied to a simply connected closed 4-manifold yield
$b_-=10+11b_+$ and $b_+\equiv1\,\op{mod}2$,
where $b_+$ and $b_-$ are the dimensions of the maximal positive and negative definite subspaces of $H^2(M)$.
For instance (the same for any type I manifold), if
 $$
\#_r\C P^2\#_s\overline{\C P^2}
 $$
possesses a non-degenerate almost complex structure, then $r=2k+1$, $s=22k+21$, $k\in\Z$
(notice that this manifold possesses an almost complex structure iff $r$ is odd).
In particular, the projective plane  blown-up at $\le20$ points
does not possess a non-degenerate almost complex structure.

For type II manifolds with the intersection form $m\begin{bmatrix}0 & 1\\ 1 & 0\end{bmatrix}+n\,E_8$,
the above relations imply $n=\frac54(m+1)>0$, $m\equiv3\,\op{mod}4$.

In particular, there exists no non-degenerate almost complex structures on K3 surfaces (then
$5\chi+6\tau=24$), on Enriques surfaces ($\chi=12$ not divisible by 24) and
on complex surfaces of Kodaira dimension 2 (surfaces of general type, then both $\chi$ and $K\!\!\cdot\!\!K>0$).

 \begin{rk}
In other words, for any almost complex structure $J$ on any of these manifolds there will exist points
$x$ with $N_J|_x=0$.
 \end{rk}

There are however closed 4-manifolds with non-degenerate almost complex structures, for instance
Abelian surfaces. Indeed on the torus $T^4=\C^2(z,w)/\Z^4$ with almost complex structure given by
 $$
J\p_z=i\,\p_z+e^{2\pi i\cdot\op{Re}(w)}\bar\p_w,\ \ J\p_w=i\,\p_w
 $$
the Nijenhuis tensor nowhere vanishes.

%%%%%%%%%%%%%%%%%%%%%%%%%%%%%%%%%%%%%%%%%%%%%%%%%%%%%%%%%%%%%%%%%%%%%%%%%%%%
%5%
\section{Almost complex structures in dimension 6}\label{S5}

For $n=3$ non-degeneracy of the Nijenhuis tensor means $\C$-antilinear isomorphism
 \begin{equation}\label{Eq5a}
N_J: \La^2T\to T.
 \end{equation}
Nijenhuis tensors were classified in \cite{K$_2$}. There are 4 non-degenerate
types\footnote{The third type is obtained from the equation in \cite{K$_2$} by changing the basis
$(X_2,X_3)\mapsto(X_2\pm JX_3)$ and complex scaling.}
 \begin{enumerate}
\item[NDG(1)\hspace{-25pt}]\hspace{25pt} $N(X_1,X_2)=X_2$, $N(X_1,X_3)=\l X_3$, $N(X_2,X_3)=e^{i\vp}X_1$,
\item[NDG(2)\hspace{-25pt}]\hspace{25pt} $N(X_1,X_2)=X_2$, $N(X_1,X_3)=X_3+X_2$, $N(X_2,X_3)=e^{i\vp}X_1$,
\item[NDG(3)\hspace{-25pt}]\hspace{25pt} $N(X_1,X_2)=e^{-i\psi}X_3$, $N(X_1,X_3)=-e^{i\psi}X_2$, $N(X_2,X_3)=e^{i\vp}X_1$,
\item[NDG(4)\hspace{-25pt}]\hspace{25pt} $N(X_1,X_2)=X_1$, $N(X_1,X_3)=X_2$, $N(X_2,X_3)=X_2+X_3$,
 \end{enumerate}
Parameters $\l,\vp,\psi$ are real. For non-exceptional values of the parameters ($\l\ne\pm1$,
$-\l\ne e^{\pm2i\vp}$, $e^{\pm2i\vp}\ne\pm1$, $e^{\pm2i\psi}\ne\pm1$ and $\psi\pm\vp\not\in\pi\Z$)
there is precisely one fixed point $\Pi$ of the
composition of maps $\Phi_2:\op{Gr}^\C_2(3)\to\C P^2$, $\Pi\mapsto N_J(\La^2\Pi)$ and $\Phi_1:\C P^2\to\op{Gr}^\C_2(3)$,
$L\mapsto\op{Im}(N_J(L,\cdot))$ that satisfies $N_J(\La^2\Pi)\cap\Pi=0$ in cases NDG(1-2), three such $\Pi$
in case NDG(3) and exactly one $\Pi$ with $N_J(\La^2\Pi)\subset\Pi$ in case NDG(4).

Isomorphism (\ref{Eq5a}) imposes global obstructions for existence of a non-dege\-ne\-rate
almost complex structure $J$.
Indeed, the total Chern class $c(T)$ is related to
$c(\La^2T)=1+(2c_1(T))+(c_2(T)+c_1(T)^2)+(-c_3(T)+c_1(T)c_2(T))$
through (\ref{Eq5a}), so (these formulae were also obtained in \cite{B})
 \begin{equation}\label{Eq5b}
3c_1(J)=0,\ c_1(J)^2=0,\ c_1(J)c_2(J)=0.
 \end{equation}
We write $c_i(J)$ instead of $c_i(T)=c_i(M)$ to stress that, contrary to 4-dimensional situation,
the Chern numbers, like $c_1c_2$, obtained through these characteristic classes
depend on the homotopy class of $J$ (and not only on the diffeomorphism type of $M$).
The obstructions for existence of almost complex structure in dimension 6 \cite{H,Ge} are
$c_1c_2\equiv0\,\op{mod}24$, $c_1^3\equiv c_3\equiv0\,\op{mod}2$, and only the last requirement
$\chi(M)\in2\Z$ does not follow from (\ref{Eq5b}).

For example, there is no non-degenerate $J$ in the homotopy class of the standard complex structure of
 $$
\C P^3\#\overline{\C P^3}\#\dots\#\overline{\C P^3}.
 $$

The canonical $G_2$-invariant almost complex structure  $J$ on $S^6$ is non-dege\-ne\-rate and it
corresponds to NDG(3) $\vp=\psi=0$. But if we blow up $S^6$, then the resulting
$S^6\#\overline{\C P^3}\simeq {\C P^3}$ (orientation forgetting diffeomorphism)
has no non-degenerate $J$ in the respective homotopy class.

More generally, according to \cite{Th} the almost complex structures on $\C P^3$ are bijective with
the total Chern classes
 $$
1+2rx+2(r^2-1)x^2+4x^3,
 $$
where $x$ is the standard generator of $H^2(\C P^3)$ and $r\in\Z$
($r=2$ corresponds to the standard complex structure and $r=-1$ corresponds to
the blown-up $S^6$). There are no non-degenerate $J$ unless $r=0$ (where we don't know).

If we impose a stronger requirement of $N_J$ being non-exceptional of type NDG(1-2), then
$N_J:\La^2\Pi\simeq L$, $T=L\oplus\Pi$ imply $c_1(J)=0$, i.e. the canonical class vanishes $K=0$.
Moreover, $c_3(J)=\l(c_2(J)+\l^2)$ for an integer cohomology class $\l\in H^2(M)$ ($\l=-c_1(\Pi)$).

For non-exceptional NDG(3) we also have $K=0$ and in addition $-c_2(J)=\l^2+\l\mu+\mu^2$, $-c_3(J)=\l\mu(\l+\mu)$
for some integer classes $\l,\mu\in H^2(M)$ (these conditions imply the above condition for the type
NDG(1-2), but not otherwise).

If $N_J$ is of type NDG(4), then $c_2(J)=\l^2+c_1(J)\l$, $c_3(J)=-2\l^3$ for some integer
cohomology class $\l\in H^2(M)$ ($\l=-c_1(\Pi)$).

For instance $\C P^3$ contains neither non-exceptional type NDG(3) nor non-exceptional type NDG(4) structures;
 % For NDG(1-2) the existence of non-degenerate almost complex structure implies the equation $\l(\l^2-2)=4$,
 % which has only one real solution $\l=2$. Thus the question arises if $\C P^3$ contains an oriented
 % rank 2 distribution with the Euler class $2a$, where $a$ is the generator of $H^2(\C P^3)$.
 % The theorem of Emery Thomas ("Fields of tangent 2-planes on even-dimensional manifolds",
 % Ann.Math. 86, 349-361, 1967) states (in our particular case) that such distribution exists
 % iff there exists a class $v\in H^4(\C P^3)$ with $v\mod2=w_4(\C P^3)=0$ and
 % $2(a\cdot v)[\C P^3]=\chi(\C P^3)=4$. This holds true for $v=2a^2$ and so $\C P^3$ has a candidate
 % for a complex line subbundle of the tangent bundle $L\subset TM$.
the sphere $S^6$ has no non-exceptional almost complex structure (of any type).
 % By the same result of Thomas $S^6$ has no rank 2 oriented subbundle.

%%%%%%%%%%%%%%%%%%%%%%%%%%%%%%%%%%%%%%%%%%%%%%%%%%%%%%%%%%%%%%%%%%%%%%%%%%%%
%6%
\section{Nondegenerate symmetric structures for $n=3$}\label{S6}

By Theorem \ref{Thm1} an almost complex structure on a 6-dimensional manifold has a finite-dimensional
symmetry algebra provided $N_J$ is non-degenerate.

An alternative approach to prove this is via the Tanaka theory \cite{Ta}.
Namely, the complexified tangent bundle ${}^\C T$ has the canonical subbundle $V=T^{1,0}$
with $\bar V=T^{0,1}$. Non-degeneracy of $N_J$ is equivalent to step-2 bracket-generating property:
$[V,V]={}^\C T$. Thus the growth of the distribution is $(3,6)$ and consequently the
symmetry group is finite-dimensional\footnote{The standard theory concerns real distributions in
the tangent bundle but can be generalized to real distributions in the complexified tangent bundle.
This is however a formal calculus. For real symmetries the Lie algebra of the compact $G_2$
(that act on $S^6$) lies in $so(7)$.} by \cite{Ta,K$_6$}.

The Tanaka prolongation of the corresponding graded nilpotent Lie algebra $\mathfrak{m}=g_{-2}\oplus g_{-1}$
(with $g_{-1}=V$ and $g_{-2}={}^\C T/V$) is equal to
 $$
{g}_{-2}\oplus{g}_{-1}\oplus{g}_0\oplus{g}_1\oplus{g}_2=
\La^2V\oplus V\oplus\op{gl}(V)\oplus V^*\oplus\La^2 V^*.
 $$
This is the 21-dimensional Lie algebra $so(3,4)$, but it is not the most symmetric case for non-degenerate
almost complex structures. Indeed, we have not used the natural isomorphism $N_J:\La^2V\simeq\bar V$,
which reduces the algebra of derivations of $\mathfrak{m}$ to at most 14 dimensions:

 \begin{theorem}\label{trm5}
For non-degenerate $J$ the symmetry group satisfies $\dim G\le 14$. The equality is attained
only for the exceptional Lie group $G_2$ in one of its two real versions - compact $G_2^c$ and
split (normal) $G_2^n$.
 % and the invariant almost complex structure on $S^6$.
 \end{theorem}

Note that here we treat only the germ of the structure $J$, so the conclusion concerns the local group
(equivalently, its Lie algebra).

 \smallskip

 \begin{proof}
$\g_2=0$ for a non-degenerate $J$ and so $\dim G\le\dim\mathcal{O}+\dim\g_1$, where $\mathcal{O}$
is the orbit of $G$ of dimension $\le\dim M=6$. We claim that $\dim\g_1\le 8$.

This is based on the case by case analysis of the normal forms from \S\ref{S5}. Consider for instance NDG(1).
For non-exceptional parameters there exists a unique $\C$-line $L=\langle X_1,JX_1\rangle\subset T$ that is
invariant with respect to $\Phi_2\Phi_1$ and satisfies $T=L\oplus\Pi$, $\Pi=\Phi_1(L)$.
An endomorphism $f\in\g_1$ preserves this splitting and is an arbitrary complex map on $L$,
while it is given by at most 4 real parameters on $\Pi$. Thus in this case $\dim\g_1\le 6$.

In some exceptional cases the splitting is also invariant with $L=\langle X_1\rangle_\C$
distinguished by the condition that $\Phi_1(L)=\langle X_2,X_3\rangle_\C$ is generated by fixed points
$\tilde L$ of $\Phi_2\Phi_1$ satisfying $\Phi_1(\tilde L)\supset\tilde L$. Then the conclusion is the same.

But for $e^{2i\vp}=-\l=\pm1$ all complex lines $L\in\C P^2$ are fixed points of $\Phi_2\Phi_1$
(those satisfying the condition $\Phi_1(L)\supset L$ form a nondegenerate quadratic cone).
Here the group $\g_1$ acts transitively on $T$ by complex transformations preserving
 \begin{equation}\label{NeqS1}
N(X_1,X_2)=X_2,\ N(X_1,X_3)=-X_3,\ N(X_2,X_3)=X_1.
 \end{equation}
This tensor is isomorphic to (\ref{NeqS2}) and so will be discussed below.

Similarly, in the cases NDG(2), NDG(4) we get $\dim\g_1\le2$, because $\langle X_1,X_2\rangle$ is $G$-invariant etc.
For NDG(3) we have $\dim\g_1\le8$, and the equality is strict except for two cases:
 \begin{equation}\label{NeqS2}
N(X_1,X_2)=X_3,\ N(X_1,X_3)=X_2,\ N(X_2,X_3)=X_1
 \end{equation}
and
 \begin{equation}\label{NeqS3}
N(X_1,X_2)=X_3,\ N(X_3,X_1)=X_2,\ N(X_2,X_3)=X_1.
 \end{equation}
This last case is the (anti-)complexification of the usual vector product in $\R^3$,
and it is realized by the $G_2^c$-invariant almost complex structure $J$ on $S^6$.

Case (\ref{NeqS2}), equivalent to (\ref{NeqS1}), is another candidate for $\dim G=14$, and
it is realizable precisely by the $G_2^n$-invariant almost complex structure $J$ on $S^{2,4}$.
The proof uses completely different technique (representation theory)
and so is delegated to the Appendices.

It follows that a non-degenerate almost complex structure $J$ on $M^6$ has dimension of the
automorphism group at most 14. If the automorphism group has maximal dimension 14,
then its acts transitively on $M^6$ ($\dim\g_0=6$), and the stabilizer $\g_1$
is (locally) either $SU(3)$ or $SU(1,2)$. This gives the two most symmetric non-degenerate
almost complex structures in dimension 6, they live on homogeneous manifolds
$G_2^c/SU(3)$ and $G_2^n/SU(2,1)$, see the Appendices for more details.
 \end{proof}

The almost complex structures that shown up at the end of the proof are well-known
\cite{E,CG,Ka}. Let us recall their construction.

In the compact case, consider the imaginary octonions $\R^7=\op{Im}(\mathbb{O})$ equipped
with the vector product $x\times y=\op{Im}(x\cdot y)$ and the Riemannian metric
$\langle x,y\rangle=\op{Re}(-x\cdot y)$. The almost complex structure $J$ on $S^6=\{x\in\R^7:\|x\|=1\}$
given by $Jv=x\times v$, $v\in T_xS^6=x^\perp$ has the symmetry group $G_2^c$.
The Nijenhuis tensor $N_J$ is non-degenerate and has type (\ref{NeqS3}).

In the split case, consider the imaginary split-octonions $\R^{3,4}=\op{Im}(\tilde{\mathbb{O}})$
equip\-ped with the vector product $x\times y=\op{Im}(x\cdot y)$ and the pseudo-Riemannian metric
$\langle x,y\rangle=\op{Re}(-x\cdot y)$ of signature $(3,4)$. The almost complex structure $J$ on $S^{2,4}=\{x\in\R^{3,4}:\|x\|=1\}$ given by $Jv=x\times v$, $v\in T_xS^{2,4}=x^\perp$
has the symmetry group $G_2^n$. The Nijenhuis tensor is non-degenerate and has type (\ref{NeqS2}).

 \begin{cor}\label{Cor1}
The octonionic structure $J$ on $S^6$ is the most symmetric
non-degenerate almost complex structure in dimension 6 with the associated
pseudo-Hermitian structure of signature (6,0).
The split-octonionic structure $J$ on $S^{2,4}$ is the most symmetric
non-degenerate almost complex structure in dimension 6 with the associated
pseudo-Hermitian structure of signature (2,4) (or (4,2)).
 \end{cor}

The local statement (that the structure is locally homogeneous of the indicated type)
is contained in Theorem \ref{trm5}. The global version follows because the compact group $G_2^c$
is unique and has the trivial center. For the non-compact group of type $G_2$ there are two versions:
one with trivial center is what we denote $G_2^n\subset SO(3,4)$, the other
is its universal cover $\tilde{G}_2^n$, but it is not algebraic and acts through the quotient by
the center $\tilde{G}_2^n/\Z_2=G_2^n$. This explains why only two models (and no finite quotients) occur
in Theorem \ref{thC}.

Notice also that in general the structures $\pm J$ on $M^6$ are not locally equivalent, but for
the two most symmetric structures from Corollary \ref{Cor1} the antipodal map $x\mapsto-x$ maps $J\mapsto-J$.
This finishes the proof of uniqueness.

%%%%%%%%%%%%%%%%%%%%%%%%%%%%%%%%%%%%%%%%%%%%%%%%%%%%%%%%%%%%%%%%%%%%%%%%%%%%
%7%
\section{Sub-maximal symmetric structures for $n=3$}\label{S7}

Now consider degenerate tensors $N_J$. By Theorem \ref{Thm1} in the case $\dim_\C\op{Im}(N_J)=2$
(denoted DG$_1$ in \cite{K$_2$}) the symmetry group $G$ depends on $\le2$ complex functions of 1
(complex) argument.

More symmetries (complex functions of 2 argument)
can occur in the case $\dim_\C\op{Im}(N_J)=1$, which has two normal forms:
 \begin{enumerate}
\item[DG$_2$(1)\hspace{-22pt}]\hspace{22pt} $N(X_1,X_2)=X_1$, $N(X_1,X_3)=0$, $N(X_2,X_3)=0$,
\item[DG$_2$(2)\hspace{-22pt}]\hspace{22pt} $N(X_1,X_2)=X_3$, $N(X_1,X_3)=0$, $N(X_2,X_3)=0$.
 \end{enumerate}
In the first case $\op{Char}^\C_\text{aff}=
(\op{Ann}\langle X_1\rangle)_\C^{1,0}\cup(\op{Ann}\langle X_1\rangle)_\C^{0,1}$.
Indeed for any $h\in\g_2$ we have $h(X_1,\cdot)=0$. Moreover the condition $N_J(h(\xi,\eta),\theta)=N_J(h(\xi,\theta),\eta)$ gives $h(\cdot,\cdot)\in\langle X_1,X_3\rangle_\C$
(via substitution $\eta=X_1$) and $h(X_3,\cdot)\in\langle X_3\rangle_\C$ (via substitution $\eta=X_3$).
There are no more relations, and we get
$\g_2=S^2\op{Ann}(\langle X_1\rangle_\C)\ot\langle X_3\rangle_\C+
S^2\op{Ann}(\langle X_1,X_3\rangle_\C)\ot\langle X_1,X_3\rangle_\C$.

Now we can calculate the characteristic variety through $\g_2$ as discussed in \S\ref{S1/2}.
The kernel bundle $K$ over $\op{Char}^\C_\text{aff}$ has complex dimension 1 on a Zariski open set
$\op{Ann}(\langle X_1\rangle_\C)\setminus\op{Ann}(\langle X_1,X_3\rangle_\C)$,
and so formally the general symmetry depends on at most 1 complex function of 2 arguments.

In order for this to be realized the two $G$-invariant distributions $\op{Im}(N_J)=\langle X_1\rangle_\C$
and $\op{Ker}(N_J)=\langle X_3\rangle_\C$ have to be integrable (otherwise their curvature tensor is another
defining relation in $G$, see \S\ref{S3}), and also their sum $\langle X_1,X_3\rangle_\C$ shall be
a holomorphic foliation of $\dim_\C=2$.
Furthermore the biggest (in the sense of dimension theory) symmetry group $G$ corresponds
to pseudoholomorphic projection along the foliation $\langle X_3\rangle_\C$ onto a 4-manifold,
which has to be the sub-maximal symmetric model $M^4$ for $n=2$.

Finally the most symmetric model of type DG$_2$(1) is $M^6=M^4(z,w)\times\C(\zeta)$.
Provided $G$ acts transitively, the almost complex structure $J$
in local coordinates $z,w,\zeta$ (corresponding to the numeration $X_2,X_1,X_3$ above)
is given by
 \begin{equation}\label{onfor}
J\p_z=i\p_z+w\p_{\bar w},\quad J\p_w=i\p_w,\quad J\p_\zeta=i\p_\zeta.
 \end{equation}
Notice that there is a 4-dimensional $J$-holomorphic foliation $\C^2(z,w)$ with non-integrable
restriction of $J$, and also lots of transversal 4-dimensional $J$-holo\-mor\-phic foliations
with integrable $J$ of the type $\Sigma^2\times\C(\zeta)$,
where $\Sigma\subset\C^2(z,w)$ is a pseudoholomorphic curve.

The bulk of symmetries form the holomorphic transformations $\zeta\mapsto\Xi(z,\zeta)$
(but there are some other symmetries depending on a function of 1 argument).
 % and some constants

\smallskip

The other case DG$_2$(2) is more symmetric:
 \begin{theorem}\label{Thm5}
Suppose $J$ is non-integrable everywhere, i.e. $N_J|_x\ne0$ $\forall x\in M$.
Then $G$ has the biggest functional dimension iff the almost complex structure is given
by the following normal form:
 \begin{equation}\label{nofor}
J\p_z=i\p_z+\zeta\p_{\bar w},\quad J\p_\zeta=i\p_\zeta,\quad J\p_w=i\p_w.
 \end{equation}
 \end{theorem}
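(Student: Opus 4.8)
The plan is to run the same prolongation–compatibility analysis as in the proof of Theorem~\ref{Thm3}, adapted to the $\text{DG}_2(2)$ tensor $N(X_1,X_2)=X_3$, $N(X_1,X_3)=N(X_2,X_3)=0$. From the symbol computation in \S\ref{S7} we already know that for this type $\op{Im}(N_J)=\langle X_3\rangle_\C$ and $\op{Ker}(N_J)$ contains the plane $\langle X_1,X_3\rangle_\C$, and that the biggest functional dimension forces these canonical distributions and the $2$-plane $\langle X_1,X_3\rangle_\C$ to be integrable pseudoholomorphic foliations; moreover $J$ must be projectible along the line $\op{Im}(N_J)=\langle X_3\rangle_\C$ onto a $4$-manifold, and that quotient $4$-manifold must itself carry a structure of maximal functional rank, hence by Theorem~\ref{Thm3} be an almost holomorphic vector bundle over a Riemannian surface, in normal form (up to the earlier reductions) $w_{\bar z}=\lambda(z,\bar z)\bar w$. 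First I would fix coordinates: $z$ a complex coordinate pulled back from the base holomorphic curve $\Sigma=M/\langle X_1,X_3\rangle_\C$, $w$ a fiber coordinate of the $\langle X_1\rangle_\C$-direction realizing the $n=2$ submodel, and $\zeta$ a coordinate along the extra integrable line $\langle X_3\rangle_\C=\op{Im}(N_J)$. In these coordinates $J$ reads $J\p_z=i\p_z+a\,\p_w+b\,\p_{\bar w}+c\,\p_\zeta+d\,\p_{\bar\zeta}$, $J\p_w=i\p_w+(\dots)\p_\zeta+(\dots)\p_{\bar\zeta}$, $J\p_\zeta=i\p_\zeta$; the constraint $J^2=-\1$ kills several coefficients and projectibility along $\p_\zeta$ removes the $\p_\zeta,\p_{\bar\zeta}$ components of $J\p_z$ and $J\p_w$.

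Next I would compute $N_J$ in this frame and compare with $\text{DG}_2(2)$. The requirement $N_J\ne0$ with image $\langle\p_\zeta\rangle_\C$ and kernel $\supset\langle\p_w,\p_\zeta\rangle_\C$ forces the single surviving structure coefficient to be essentially the "$\zeta$-linear" term coupling $\p_z$ to $\p_{\bar w}$; after a fiberwise normalization of $\zeta$ (analogous to the normalization of $w$ in the proof of Theorem~\ref{Thm3}, replacing $\zeta$ by a suitable $J$-holomorphic coordinate on the $\langle X_3\rangle_\C$-leaves and then rescaling) and a shift/gauge in $w$ of the type $z\mapsto Z(z)$, $w\mapsto U(z)w$, the coefficient function can be brought to the constant $1$, i.e. $J\p_z=i\p_z+\zeta\,\p_{\bar w}$, $J\p_\zeta=i\p_\zeta$, $J\p_w=i\p_w$. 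The maximality of the functional dimension enters exactly as in \S\ref{S3}: any non-constant modulus of this coefficient would produce an extra defining relation (a curvature-type obstruction) cutting the symbol down, so the extremal case is precisely the flat normal form. I would then verify directly that \eqref{nofor} does realize the full symbol — compute $N_J(\p_z,\p_w)=-2i\,\p_{\bar\zeta}$ (up to sign/constant), confirm it is of type $\text{DG}_2(2)$ with $\op{Im}(N_J)=\langle\p_\zeta\rangle_\C$, $\op{Ker}(N_J)=\langle\p_w,\p_\zeta\rangle_\C$ — and exhibit the point-symmetry pseudogroup, which should contain the holomorphic reparametrizations $w\mapsto w+g(z,\zeta)$ with $g$ solving the induced linear Cauchy–Riemann type system, giving one complex function of two arguments, matching the bound from \S\ref{S7}.

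The main obstacle I anticipate is the normalization step that reduces the coefficient function to a constant: one must check that the group of admissible coordinate changes (fiberwise diffeomorphisms in $\zeta$, bundle gauges $w\mapsto U(z)w$, base changes $z\mapsto Z(z)$, plus shifts by solutions) is rich enough to absorb the remaining functional modulus \emph{while keeping the three foliations and the projectibility intact}, i.e. that the residual "curvature" invariant — the analogue of the quadratic form $\Lambda=(\log\lambda)_{z\bar z}\,dz\,d\bar z$ and the metric $Q$ appearing in the proof of Theorem~\ref{Thm3} — must vanish whenever the functional dimension is maximal. This will again require splitting off the generic (non-maximal) case, where a nontrivial differential relation on the coefficient appears and forces a finite-type reduction of $\mathcal{S}_J$, from the extremal case where all such relations are forced trivial, leaving only \eqref{nofor}. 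The rest is the routine Lie-symmetry prolongation bookkeeping already illustrated in \S\ref{S3}, now carried out for a system of two functions of the base variable coupled through the single structure function.
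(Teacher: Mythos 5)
There is a genuine structural error: you have conflated the case DG$_2$(2) of the theorem with the case DG$_2$(1) treated just before it in \S\ref{S7}. For the tensor $N(X_1,X_2)=X_3$, $N(X_1,X_3)=N(X_2,X_3)=0$ one has $\op{Ker}(N_J)=\op{Im}(N_J)=\langle X_3\rangle_\C$, a single complex line; it is not true that $\op{Ker}(N_J)$ contains the plane $\langle X_1,X_3\rangle_\C$, since $N(X_1,X_2)=X_3\ne0$. Consequently, projecting along $\op{Im}(N_J)$ annihilates the Nijenhuis tensor, and the quotient $4$-manifold carries the \emph{integrable} structure of $\C^2$ -- not the non-integrable sub-maximal model of Theorem \ref{Thm3}. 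Your plan (realize the quotient as the $n=2$ sub-maximal bundle $w_{\bar z}=\l\bar w$ and add a transverse line $\zeta$ along $\op{Im}(N_J)$) is exactly the DG$_2$(1) configuration, and carried out it yields the normal form \eqref{onfor}, $J\p_z=i\p_z+w\p_{\bar w}$, whose pseudogroup depends on only $1$ complex function of $2$ arguments; the model \eqref{nofor} of the theorem has kernel bundle of rank $2$ and its symmetries depend on $2$ such functions, which is precisely why it, and not \eqref{onfor}, is extremal. Your closing verification confirms the mix-up: for \eqref{nofor} one computes $N_J(\p_z,\p_\zeta)\propto\p_{\bar w}$ while $N_J(\p_z,\p_w)=0$ (the coefficient $\zeta$ does not depend on $w$), and $\op{Ker}(N_J)=\langle\p_w\rangle_\C$ is a line, not the plane $\langle\p_w,\p_\zeta\rangle_\C$; your symmetry count of one function of two arguments also matches DG$_2$(1), not DG$_2$(2).

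For comparison, the actual argument runs differently: first one shows from the symbol that for DG$_2$(2) the characteristic variety is $(\op{Ann}\langle X_3\rangle_\C)^{1,0}_\C\cup(\op{Ann}\langle X_3\rangle_\C)^{0,1}_\C$ with $\op{rank}(K)=2$, so this algebraic type is the unique one attaining the largest amount of initial data; then compatibility forces integrability of the single invariant line $\op{Im}(N_J)=\op{Ker}(N_J)=\langle X_3\rangle_\C$, a $J$-holomorphic projection onto $\C^2$ with its standard complex structure, constancy of $N_J$ (from transitivity) and an almost-holomorphic-vector-bundle argument as in Theorem \ref{Thm3}, but now over a two-dimensional complex base, producing \eqref{nofor}; finally the pseudogroup is computed directly from the involutive system for $J$-holomorphic changes $z\mapsto Z(z,\zeta)$, $\zeta\mapsto\Xi(z,\zeta)$, $w\mapsto W$: it consists of constant-Jacobian biholomorphisms in $(z,\zeta)$ together with shifts $w\mapsto\bar c\,w+\tfrac{i}{2}\overline{W_0}+\Psi(z,\zeta)$, i.e.\ two complex functions of two arguments. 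To salvage your approach you would have to start from the correct invariant data (image equal to kernel, one complex line), in which case there is no intermediate non-integrable $4$-dimensional quotient to appeal to, and the normalization must be performed relative to the fibration over $\C^2(z,\zeta)$ rather than over a Riemann surface.
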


We shall see that the symmetries depend on 2 complex functions of 2 arguments,
but there are also functions with fewer arguments arising via Cartan's test.
The structure of the theorem maximizes the amount of this initial value data
(otherwise, as in Theorem~\ref{Thm3}, there are symmetric models with slightly
smaller pseudogroup $G$; they correspond to change $J\p_z=i\p_z+\l\p_{\bar w}$ in (\ref{nofor})).

 \begin{proof}
The characteristic variety of DG$_2$(2) is
 $$
\op{Char}^\C_\text{aff}=
(\op{Ann}\langle X_3\rangle_\C)_\C^{1,0}\cup(\op{Ann}\langle X_3\rangle_\C)_\C^{0,1},
 $$
since $h(X_3,\cdot)=0$ for any $h\in\g_2$. Moreover $\op{rank}(K)=2$.

To see this we can use the defining relation for $\g_1$ from \S\ref{S1}.
If $f=\varrho\ot v\in\g_1^\C$ with $\varrho\in\op{Char}^\C_\text{aff}$, $v\in K_\varrho$, then
$\overline{\varrho(\xi)}N_J(v,\eta)+\overline{\varrho(\eta)}N_J(\xi,v)=v\cdot\varrho(N_J(\xi,\eta))$.
Substituting $\eta=X_3$ we get $v\in\langle X_3\rangle_\C$ or $\varrho(X_3)=0$, but the first possibility
implies the second. Thus, denoting by $\theta_i$ the coframe dual to the frame $X_i$, we get
$\varrho=\a_1\theta_1+\a_2\theta_2\ne0$ $\Rightarrow$ $K_\varrho=\langle \a_2X_1-\a_1X_2,X_3\rangle_\C$.

We can also see this by calculating $\g_2$ via (\ref{NJJ}) or as the prolongation of $\g_1$.
Denoting $\Pi=\langle X_1,X_2\rangle_\C$,
$\Pi^*=\langle \theta_1,\theta_2\rangle_\C=\op{Ann}(\langle X_3\rangle_\C)$ and
$\op{sl}_2(\C)\subset\Pi^*\ot\Pi$, we get $\g_1=\op{sl}_2(\C)\oplus \Pi^*\ot\langle X_3\rangle_\C$
(tensor products over $\C$).

Thus we obtain the biggest possible functional dimension of $G$ for non-integrable $J$,
so if it is realized the case will be sub-maximal.

Compatibility conditions include integrability of the invariant distribution $\op{Im}(N_J)=\op{Ker}(N_J)=\langle X_3\rangle_\C$.
This yields a $J$-holomorphic projection of $(M^6,J)$ onto $\C^2$ with (integrable) complex structure
and it also implies constancy of the Nijenhuis tensor (otherwise the pseudogroup $G$ does not act transitively).
The arguments similar to those used in Theorem \ref{Thm3} show that in order to keep the size of $G$ the structure $J$
must come from an almost holomorphic vector bundle, and then we justify the normal form of the theorem.

To calculate the symmetries of the model
let us introduce complex coordinates $(z,\zeta,w)$ according to the order
$(X_1,X_2,X_3)$ in which $J$ takes the form (\ref{nofor}). From the above description
it is clear that $J$-holomorphic change of coordinates has the form $z\mapsto Z(z,\zeta)$,
$\zeta\mapsto\Xi(z,\zeta)$, $w\mapsto W(w,\bar w,z,\bar z,\zeta,\bar\zeta)$

The condition that $J$ has the same normal form in the new coordinate system writes as the system of PDEs
 $$
W_{\bar w}=0,\quad {\overline W}_{\zeta}=-\tfrac{i}2 Z_\zeta\,\Xi,\quad
{\overline W}_z=-\tfrac{i}2(Z_z\,\Xi-\zeta{\overline W}_{\bar w}).
 $$
This system is not yet formally integrable, and we need to add the compatibility conditions.
The resulting involutive system is ($c$ is a constant):
 \begin{gather*}
Z_z\Xi_\zeta-Z_\zeta\Xi_z=c,\quad W_w=\bar c,\quad W_{\bar w}=0,\quad
W_{\bar z}=\tfrac{i}2(\overline{Z_z}\,\overline \Xi-\bar c\,\bar\zeta),\quad
W_{\bar\zeta}=\tfrac{i}2\overline{Z_\zeta}\,\overline \Xi.
 \end{gather*}
Thus the form $\Omega=(Z_z\,\Xi-c\,\zeta)\,dz+Z_\zeta\,\Xi\,d\zeta$ is closed and
we can integrate it $\Omega=dW_0$, where $W_0=W_0(z,\zeta)$.
This implies $W=\bar c\,w+\frac{i}2\overline{W_0}+\Psi$,
where $\Psi=\Psi(z,\zeta)$ is an arbitrary holomorphic function.

Finally, the change in $(z,\zeta)$ is a biholomorphism with constant holomorphic Jacobian,
i.e. it involves 1 complex function of 2 arguments;
the other such function $\Psi$ comes from the change in $w$
(also a function with $1$ argument and a constant are involved into the general symmetry).
 \end{proof}

%%%%%%%%%%%%%%%%%%%%%%%%%%%%%%%%%%%%%%%%%%%%%%%%%%%%%%%%%%%%%%%%%%%%%%%%%%%%
%8%
\section{Dimensions $2n=8$ and higher}\label{S8}

 \abz
In the case $\dim_\C M=n>3$ the orbit space of $\op{GL}(n,\C)$-action on $\La^2T^*\ot_{\bar\C}T$ is quite complicated
(as well as its real analog - normal forms of (2,1)-tensors). However we can give a sharper estimate
for the dimensional bound of $G$.

 \begin{theorem}\label{trm7}
For $n\ge4$ in the case $N_J\ne0$, the pseudogroup $G$ depends on at most $(n-2)$
complex functions of $(n-1)$ arguments.
 \end{theorem}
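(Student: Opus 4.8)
The plan is to reduce the estimate to the same characteristic-variety computation that underlies Theorems A, 1 and 5, but now squeezing an extra unit out of the dimension by exploiting the antilinearity of $N_J$ more carefully. First I would recall from the proof of Theorem \ref{Thm1}(ii) that, writing $W=\op{Im}(N_J)$ and $W^\perp=\{v\in T:N_J(v,W)=0\}$, the higher symbols of the Lie equation $\E=\mathfrak{Lie}(J,N_J)$ satisfy $\g_k\subset S^k\op{Ann}(W)\ot_\C W^\perp$ for $k>1$, so that $\op{Char}^\C_\text{aff}(\E)\subset(\op{Ann}W)_\C^{1,0}\cup(\op{Ann}W)_\C^{0,1}$. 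When $N_J\ne0$ but is degenerate, $W$ is a proper $\C$-subspace; the point is that for $n\ge4$ the relation (\ref{NJJ}), namely $N_J(h(\xi,\eta),\zeta)=N_J(h(\xi,\zeta),\eta)$, forces $h$ to take values in $W^\perp$ \emph{and} to annihilate $W$, and I claim these two conditions together cut the admissible characteristic covectors down to an affine cone over $(\op{Ann}W)_\C$ of dimension at most $n-1$, with kernel bundle contained in $W^\perp\cap(\text{something strictly smaller than }T)$. So the functional dimension $p\le n-1$ is immediate; the real content is the bound $\frac12\z\le n-2$ on the functional rank.

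The key step is therefore to show that over a generic characteristic covector $\varrho$, the kernel space $K_\varrho=\{v\in T:N_J(v,\op{Ann}(\varrho))=0\}$ has complex dimension at most $n-2$ once $n\ge4$. I would argue as follows: $\op{Ann}(\varrho)$ is a $\C$-hyperplane $H\subset T$ (of complex codimension $1$), and $K_\varrho=H^{\perp_N}$ is the $N_J$-orthogonal of $H$. If $\dim_\C K_\varrho\ge n-1$ for a Zariski-dense set of $\varrho$, then by varying $\varrho$ (equivalently $H$) one shows $N_J$ must vanish identically on $T\we T'$ for a codimension-$\le1$ subspace, i.e. $N_J$ would have rank $\le 1$ in a way incompatible with $N_J\ne0$ in dimension $\ge4$ — this is exactly the kind of dimension count that failed to give a contradiction when $n\le3$ (which is why Theorems \ref{Thm3} and \ref{Thm5} genuinely have functional rank $n-1$) but succeeds for $n\ge4$ because there is enough room: a nonzero antilinear $N_J:\La^2T\to T$ cannot have all its ``hyperplane kernels'' of codimension $1$. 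Concretely I would stratify by $d=\dim_\C W$ and $d^\perp=\dim_\C W^\perp$; the worst case is $d=1$ (so $N_J$ is valued in a single complex line), and even there one checks that $K_\varrho\subset W^\perp$ is cut by at least two independent linear conditions coming from two independent covectors in $\op{Ann}(\varrho)$ paired against the line $W$, whenever $n\ge4$ gives $\op{Ann}(\varrho)\cap(\text{stuff})$ enough dimensions. Summing $\op{rank}(K)\cdot\deg$ over the (at most two conjugate) top-dimensional components then yields $\z\le 2(n-2)$, i.e. at most $n-2$ complex functions of $n-1$ arguments.

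The main obstacle I anticipate is the case analysis hidden in ``generic $\varrho$'': one must be careful that the bound $\dim_\C K_\varrho\le n-2$ holds on a \emph{Zariski-open} subset of each top-dimensional irreducible component of $\op{Char}^\C_\text{aff}(\E)$, not merely at one point, since the functional rank is computed from generic kernel ranks. Degenerate $N_J$ can be quite wild for $n\ge4$ (the $\op{GL}(n,\C)$-orbit space on $\La^2T^*\ot_{\bar\C}T$ being, as the section opening admits, complicated), so rather than classify normal forms I would phrase the kernel-dimension estimate intrinsically: for any nonzero antilinear $\mu:\La^2_\C T\to T$ with $T$ of complex dimension $n\ge4$, the function $\varrho\mapsto\dim_\C\{v:\mu(v,\op{Ann}\varrho)=0\}$ is $\le n-2$ off a proper subvariety. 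This is a clean linear-algebra lemma whose proof is a rank count — if the kernel had dimension $n-1$ generically, contracting with two generic hyperplanes would exhibit $\mu$ as supported on a pencil, contradicting $\mu\ne0$ together with $n\ge 4$ — and once it is in place the theorem follows verbatim from the Cartan-test machinery recalled in \S\ref{S1/2}, exactly as in the proofs of Theorems \ref{Thm1}, \ref{Thm3} and \ref{Thm5}.
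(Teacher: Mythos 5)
Your overall strategy coincides with the paper's: use Theorem~\ref{Thm1} to see that the functional dimension can reach $n-1$ only when $W=\op{Im}(N_J)$ has complex dimension $1$, and then reduce the theorem to showing that, for $\varrho$ generic in the top-dimensional components $(\op{Ann}W)_\C^{1,0}\cup(\op{Ann}W)_\C^{0,1}$ of the characteristic variety, the kernel $K_\varrho=\{v\in T: N_J(v,\op{Ann}\varrho)=0\}$ has $\dim_\C K_\varrho\le n-2$. The gap is in your justification of that kernel bound. Your sketched contradiction --- that a generic kernel of dimension $n-1$ would ``exhibit $\mu$ as supported on a pencil, contradicting $\mu\ne0$,'' or force $N_J$ to ``have rank $\le1$'' --- does not work as stated: when $\dim_\C W=1$ one identifies $N_J$ with a $\C$-valued $2$-form $\oo$ ($N_J=\oo\ot e$), and a pencil-supported, i.e.\ rank-$2$, form $\oo$ is not a contradiction at all; it is precisely the extremal case $m=1$ that attains the bound $n-2$ and yields the sub-maximal models. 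So the dichotomy you need is finer than ``rank $\le1$ versus not,'' and your proposal never identifies the actual mechanism by which $n\ge4$ enters.

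The correct count (the paper's) is: let $Z=\op{Ker}(\oo)$ and $2m=\op{rank}_\C\oo$ (even by Cartan's lemma), so $\dim_\C Z=n-2m$; since every characteristic $\varrho$ annihilates $W$, one gets $K_\varrho=(\op{Ann}\varrho)^{\perp\oo}$, equal to $Z$ if $\op{Ann}\varrho$ is transversal to $Z$ and to $Z\oplus L^1$ otherwise, i.e.\ $\dim_\C K_\varrho\in\{n-2m,\,n-2m+1\}$. The generic value is $n-2m\le n-2$ unless non-transversality holds for \emph{all} characteristic $\varrho$, which means $\op{Ann}(W)\subset\op{Ann}(Z)$, i.e.\ $Z\subset W$; this forces either $Z=0$ (so $n=2m$ and the kernel rank is $1$) or $Z=W$ one-dimensional (kernel rank $2$), both $\le n-2$ once $n\ge4$. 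In particular the dangerous configuration --- a rank-$2$ form whose $(n-2)$-dimensional kernel lies inside the image line $W$ --- is impossible exactly because $n-2\ge2>1=\dim_\C W$; for $n=3$ it \emph{is} possible (this is DG$_2$(2), Theorem~\ref{Thm5}, with kernel rank $2=n-1$), which shows that your ``enough room'' heuristic must be replaced by this transversality/parity dichotomy rather than a rank-$1$ statement. With the kernel bound established this way, the remainder of your argument (kernel rank times degree, summed over the two conjugate components, via the Cartan test of \S\ref{S1/2}) is the same as the paper's.
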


 \begin{proof}
By Theorem \ref{Thm1} $p\le\dim_\C\op{Ann}(W)$ for $W=\op{Im}(N_J)$ and so
the functional dimension of $G$ is $(n-1)$ iff $\dim_\C W=1$.
In this case $N_J$ can be identified with a $\C$-valued 2-form $\oo$. Let $Z\subset T$
be the kernel of $\oo$. Denote by $2m=\op{rank}_\C(\oo)$ (rank is even by Cartan's lemma).
Then $\dim_\C Z=n-2m$.

We have $\op{Ann}(\varrho)\supset W$ for a characteristic covector
$\varrho\in\op{Char}^\C_\text{aff}$
 %$=\op{Ann}(W)^{1,0}_\C+\op{Ann}(W)^{0,1}_\C$
and the restriction $\oo|_{\op{Ann}(\varrho)}$ can have $\C$-rank $2m$ or $2(m-1)$
depending on whether $\op{Ann}(\varrho)$ is transversal to $Z$ or not.
The first situation is generic with respect to the choice of $\varrho$ unless $n=2m$ or $W=Z$.

Therefore the kernel space $K_\varrho=\op{Ann}(\varrho)^\perp$ (skew-orthogonality
with respect to $\oo$) is $Z$ or $Z\oplus L^1$ respective the value of the rank
$2m$ or $2(m-1)$ (in the latter case $L^1\in\op{Ker}(\oo|_{\op{Ann}\varrho})$ is
$\C$-generated by one vector not belonging to $Z$).

In other words, the functional $\C$-rank $\frac12\z$ is equal to $n-2m\le n-2$
(achieved for $m=1$) or $n-2m+1$ (achieved either for $n=2m$, when $\frac12\z=1$, or
for $W=Z$ being 1-dimensional, when $\frac12\z=2$; in any case for $n\ge4$ this branch gives
the value $<n-2$).
 % If $n=2$, then the possibility $\frac12\z=2$ is not realized since $dim_\C Z$ is even!
 \end{proof}

We see from the proof that the symmetry pseudogroup $G$ is largest in the case $N_J\ne0$,
$n\ge4$ only if $m=\op{rank}_\C N_J=1$, i.e. the only non-trivial relation for the Nijenhuis tensor
can be either $N_J(X_1,X_2)=X_1$ ($W\cap Z=0$) or $N_J(X_1,X_2)=X_3$ ($W\subset Z$).
As before we must impose integrability assumptions on the involved
distributions in order to realize this $G$.

Thus for $n\ge4$ there are 2 different cases with the largest symmetry:
the 6-dimensional manifolds described by (\ref{onfor}) or (\ref{nofor}),
multiplied by $\C^{n-3}$. In other words, the sub-maximal symmetric
models are either $M^{2n}_1=M^4\times\C^{n-2}$, where $(M^4,J)$ is the submaximal
model from Theorem \ref{Thm3}, or $M^{2n}_2=M^6\times\C^{n-3}$, where
$(M^6,J)$ is the submaximal model from Theorem \ref{Thm5}.

Notice that both cases are described by almost holomorphic vector bundle
$\pi_1:M^{2n}\to\C^{n-1}$ (with $J$ being minimal in the sense of \cite{GS}),
and that both models contain a canonical pseudoholomorphic foliation by
the kernel of the Nijenhuis tensor $\pi_2:M^{2n}\to\C^2$.
 % The bulk of symmetries consists of holomorphic transformations of the fibers of $\pi_2$
 % as the functions of $\op{Im}(\pi_1)$ in the first case; in the second case the arguments
 % are the same but the functions are: additional functions from $\C^{n-3}$ and the fiber $w$
 % of almost holomorphic vector bundle $\pi_1$.

\smallskip

The other structures $J$ with large symmetry pseudogroup $G$ must also have the involved
invariant distributions integrable, so that these structures give rise to canonical
pseudoholomorphic foliations. The hierarchy of intermediate size groups $G$ is immense,
but also the totality of almost complex structures $J$ with non-degenerate $N_J$ is vast.
To study the latter the following idea is useful.

\smallskip

Consider the $\C$-antilinear map $N_J:\La^2_\C T\to T$, $T\simeq\C^n$. For non-degenerate $N_J$
the pre-image $N_J^{-1}(0)$ has complex dimension $d=\frac12n^2-\frac32n$.
The Pl\"ucker embedding $\rho:\op{Gr}^\C(2,n)\hookrightarrow P^\C\La^2T$ has image of codimension $d+3-n$.
Therefore generically $\Sigma=\varpi(N_J^{-1}(0))\cap\op{Im}\rho$ has dimension $n-4$,
where $\varpi:\La^2\to P^\C\La^2$ is the projectivization map.
The degree of subvariety $\Sigma$ is the same as the degree of a Plucker embedding of the Grassmannian
$\op{Gr}^\C(2,n)$, namely the Catalan number $\frac1{n-1}\binom{2n-4}{n-2}$.

Let us consider in more details the case $n=4$, when $\Sigma$ is zero-dimensional of degree 2.
Thus $\Sigma$ consists of two 4-dimensional $J$-invariant subspaces of $T$, which we assume
transversal $T=\Pi_1\oplus\Pi_2$ ($\Pi_i$ are characterized uniquely
by the condition $N_J|_{\Pi_i}=0$).
We will call such almost complex structure $J$ on 8-dimensional manifold $M$
\emph{transversally non-degenerate}.

This assumption imposes topological restrictions on $M$ and the homotopy type of $J$.
Indeed, % because $N_J|_{\Pi_i}=0$,
$N_J:\Pi_1\ot\Pi_2\to T$ is an anti-isomorphism and therefore
the total Chern class of the tangent bundle is equal to
 $$
c(T,J)=c(\Pi_1\oplus\Pi_2)=c(\overline{\Pi_1\ot\Pi_2})
 $$
Denoting $c_k'=c_k(\Pi_1)$, $c_k''=c_k(\Pi_2)$ the Chern classes of the
complex rank 2 bundles $\Pi_i$ we can use the formulae $c(\Pi_1\ot\Pi_2)=
1+2(c_1'+c_1'')+((c_1'+c_1'')^2+c_1'c_1''+2(c_2'+c_2''))+((c_1'+c_1'')(c_1'c_1''+2(c_2'+c_2''))
+((c_2'-c_2'')^2+(c_1'+c_1'')(c_1'c_2''+c_1''c_2'))$ and
$c(T,J)=(1+c_1'+c_2')(1+c_1''+c_2'')$. They imply the following restrictions
for the Chern classes $c_i=c_i(T,J)$ of transversally non-degenerate $J$:
 $$
3c_1=0,\quad 3c_2=-3q^2,\quad 3c_3=0,\quad 15c_4=0
 $$
for some $q\in H^4(M)$ (here $q=c_2(\Pi_1)$). In particular, $\chi(M)=0$.

Moreover generically the 6-dimensional manifold
$\Lambda^6=\{N_J(\xi,\eta):\xi\in\Pi_1,\eta\in\Pi_2\}\subset T$ meets $\Pi_i$
at two different complex lines $L^1_i,L^2_i$ (degree of $\Lambda^6$ is 2).
This means that there exist two canonical decompositions into the sum of complex lines
$\Pi_i=V_i^{1s}\oplus V_i^{2s}$ such that $N_J(V_1^{js},V_2^{js})=L_s^j$, $1\le i,j,s\le2$
(the line bundles are defined up to the changes $(i,s)\mapsto(i+1,s+1)$ and $j\mapsto j+1$ when
we consider $i,j,s\mod2$).

If the subbundles $V_i^{js}\subset\Pi_i$ are transversal to one of the lines $L_i^j$ (say to $L_i^2$),
we call $J$ (resp. $N_J$) \emph{strongly non-degenerate}.
It is easy to classify strongly non-degenerate Nijenhuis tensors
(the moduli space has complex dimension 8). Indeed, there exists a basis $e_i^j\in L_i^j$
(these 4 vectors in $T$ are defined up to simultaneous multiplication by $\sqrt[3]{1}$)
such that $N_J$ is given by 8 complex constants $\l_{i}^{js}$ and the relations
 $$
N_J(e_1^1+\l_{1}^{js}e_1^2,e_2^1+\l_{2}^{js}e_2^2)=e_s^j,\qquad e_i^1+\l_{i}^{js}e_i^2\in V_i^{js}.
 $$

Existence of strongly non-degenerate structures on $M$ implies even stronger topological
obstructions on the homotopy class of $J$.
Let for simplicity the line bundles be numerated (elsewise the torsion order has to
be multiplied by 4). Then the above relations imply that the 1st Chern classes
$c_1(V_i^{js})=c_1(L_i^j)=\a$ are independent of indices and $3\a=0$ (the claim that
$c_1(V_i^{js})=c_1(L_i^j)$ depend only on the index $i$ follows from
$\La^2\Pi_i=L_i^1\ot L_i^2=V_i^{1s}\ot V_i^{2s}$;
the rest follows from the defining equation for $N_J$).

Consequently, we have:
$c_1=4\a=\a$, $c_2=6\a^2=0$, $c_3=4\a^3=c_1^3$, $c_4=\a^4=c_1^4$
(if $H^*(M)$ has no torsion, then all Chern numbers vanish).

According to \cite{MG} the existence of an almost complex structure in dimension 8 is equivalent
to such relations:
 \begin{gather*}
-c_1^4+4c_1^2c_2+c_1c_3+3c_2^2-c_4\equiv0\,\op{mod}720,\\
2c_1^4+c_1^2c_2\equiv0\,\op{mod}12,\quad c_1c_3-2c_4\equiv0\,\op{mod}4.
 \end{gather*}
For strongly non-degenerate $J$ these relations specify to the only constraint $c_4\equiv0\,\op{mod}720$.
It would be interesting to understand the restrictions on Chern classes if $N_J$
is non-degenerate in the weak sense, $\op{Im}(N_J)=T$.

%%%%%%%%%%%%%%%%%%%%%%%%%%%%%%%%%%%%%%%%%%%%%%%%%%%%%%%%%%%%%%%%%%%%%%%%%%%%
\appendix

%A%
\section{Associated Hermitian metrics in dimension 6}\label{SA}

 \abz
In \cite{B} an invariant Hermitian metric was associated to an almost complex structure in dimension 6.
This construction depends pointwise on $J,N_J$, and can be written following \cite{K$_5$} so
(the trace is over $\R$):
 $$
h(\xi,\eta)=\op{Tr}[N_J(\xi,N_J(\eta,\cdot))+N_J(\eta,N_J(\x,\cdot))].
 $$
Since it satisfies the property $h(J\xi,J\eta)=h(\xi,\eta)$, it has (complex) type $(1,1)$.

The corresponding almost symplectic form equals $\oo(\xi,\eta)=h(J\x,\eta)$
(generically it is not closed\footnote{In the most symmetric non-degenerate cases the
3-form $d\oo$ is the restriction of a generic 3-form in $\R^7$ with stabilizer $G_2$.}).
The hermitian metric and the orientation on $M^6$ induced by $J$ determine the smooth volume form
$\Omega$, and there is also the canonical holomorphic $(3,0)$-form $\sigma$ related to $\Omega$
 $$
\Omega=\frac{i}4\sigma\wedge\bar\sigma=\pm\frac13\omega^3
 $$
(the form $\sigma$ is normalization and alternation of the following $\C$-valued 3-tensor:
$\varsigma(X,Y,Z)=h(N_J(X,Y),Z)-i\,h(N_J(X,Y),JZ)$\,).

For the cases of our current interests the metric on $\m=T_xM$ in complex coordinates
(naturally related to the basis in which the normal form is given) is the following:
 \begin{equation}\label{hstr12}
h=dz_1\cdot d\bar z_1+dz_2\cdot d\bar z_2+\e\,dz_3\cdot d\bar z_3,
 \end{equation}
where $\e=+1$ for (\ref{NeqS3}) and $\e=-1$ for (\ref{NeqS2}).

The almost complex structure on $\m$ writes so: $J=J_1+J_2+J_3$, where
$J_k=i\,\partial_{z_k}\otimes dz_k-i\,\bar\partial_{z_k}\otimes d\bar z_k=
\partial_{y_k}\otimes dx_k-\partial_{x_k}\otimes dy_k$ is the complex
structure on the $k$-th summand in $\C^3$.

The canonical holomorphic and the smooth volume forms in both cases are
 $$
\sigma=d^3z=dz_1\wedge dz_2\wedge dz_3,\qquad \Omega=\frac{i}4d^3z\wedge d^3\bar z.
 $$
Now it's easy to calculate the stabilizer $\h=\g_1$, which is the automorphism of the triple
$(h,J,\z)$ on $\m$, in both cases. It is important that
this triple has the same automorphism group as the pair $(J,N_J)$ \cite{B,K$_5$}.

We have $\h=\op{su}(3)$ for (\ref{NeqS3}) and $\h=\op{su}(2,1)$ for (\ref{NeqS2}).

%B%
\section{Lie algebras in dimension 14}\label{SB}

Here we consider the stabilizer $\h=\g_1$ of dimension 8 from the previous section naturally acting on $\m=\g_0=T_x$ of dimension 6. The symmetry algebra has maximal dimension 14
if we can construct Lie algebra $\g$ allowing the following exact 3-sequence
 $$
0\to\h\to\g\to\m\to0.
 $$
There is indeed a flat solution $\g=\h\ltimes\m$, but then $N_J=0$ that is not appropriate.
Otherwise we will show that the Lie algebra $\g$ can be reconstructed uniquely
from the above exact 3-sequence for both (\ref{NeqS3}) and (\ref{NeqS2}).

Since $\h$ is simple, we have direct decomposition of $\h$-representations $\g=\h\oplus\m$,
and $\m=\C^3$ as $\h$-module.
Thus to restore the Lie algebra $\g$ we need two components of the $\h$-morphisms $\Lambda^2\m\to\g$.
The $\m$-part is given uniquely (up to scale, see below), so we have to determine
the $\h$-component of $[\m,\m]$.

As $\h$-representation $\Lambda^2\m$ is reducible, and it decomposes into three submodules of dimensions 6, 1, 8, which are better described in the complexification:
 $$
\Lambda^2_\C\m=\Lambda^{2,0}(\m)\oplus\Lambda^{1,1}(\m)\oplus\Lambda^{0,2}(\m).
 $$
The piece $\Lambda^{2,0}\oplus\Lambda^{0,2}$ is the complexification of a 6 dimensional summand, and
$\Lambda^{1,1}=\C\oplus\Lambda^{1,1}_0$ corresponds to $1+8$ dimensional piece.
Here we identify via the Hermitian metric $\Lambda^{1,1}=\op{End}(\m)_\C$ and
the endomorphisms $\h$-equivariantly split into the scalar and traceless parts
(now the trace is over $\C$).

Thus the $\m$-part of the above $\h$-morphism $\Lambda^2\m\to\m$ is given by the projection
($\C$-antilinearity yields the required tensor $N_J\ne0$)
 $$
\Lambda^2_\C\m\supset\Lambda^{2,0}(\m)\oplus\Lambda^{0,2}(\m)
\simeq\Lambda^{0,1}(\m)\oplus\Lambda^{1,0}(\m)=\m_\C
 $$
Formula of this morphism is (the scaling real factor $a$ is to be fixed later)
 \begin{equation}\label{a}
\xi\we\eta\mapsto a\cdot\sigma(\xi,\eta,\cdot)^{\sharp_h}.
 \end{equation}

In the complex basis of $\m_\C=\m^{1,0}+\m^{0,1}$ the $\m$-component of the
is given by the relations ($\e=+1$ for (\ref{NeqS3}) and $\e=-1$ for (\ref{NeqS2}))
 $$
[\partial_{z_1},\partial_{z_2}]=a\e\,\bar\partial_{z_3},\ \
[\partial_{z_2},\partial_{z_3}]=a\,\bar\partial_{z_1},\ \
[\partial_{z_3},\partial_{z_1}]=a\,\bar\partial_{z_2}
 $$
 and their conjugates.

The $\h$-part $\Lambda^2\m\to\h$ is given via the complexification as the projection
 $$
\Lambda^2_\C\m\supset\Lambda^{1,1}\to\Lambda^{1,1}_0\simeq\h_\C,
 $$
where the identification $\Lambda^{1,1}=\m_{0,1}\circ\m_{1,0}\simeq\op{End}(\m)_\C\subset
\m^{1,0}\otimes\m_{1,0}+\m^{0,1}\otimes\m_{0,1}$
is given by the Hermitian metric, $\m_{1,0}^*=\m^{0,1}$ and $\m_{0,1}^*=\m^{1,0}$.
Formula of this morphism is (the scaling real factor $b$ is to be fixed later)
 \begin{equation}\label{b}
\xi\we\eta\mapsto b\cdot(3\xi\ot h(\eta,\cdot)-3\eta\ot h(\xi,\cdot)+\omega(\xi,\eta)\,J).
 \end{equation}

In the complex basis the $\h$-component of the bracket on $\m$ is given by
 $$
[\partial_{z_1},\bar\partial_{z_1}]=ib\,(J-3J_1),\ \
[\partial_{z_2},\bar\partial_{z_2}]=ib\,(J-3J_2),\ \
[\partial_{z_3},\bar\partial_{z_3}]=ib\e\,(J-3J_3),
 $$
 \vskip-27pt
 \begin{alignat*}{2}
[\partial_{z_1},\bar\partial_{z_2}]=3b\,(\partial_{z_1}\otimes dz_2-\bar\partial_{z_2}\otimes d\bar z_1), & \quad
[\partial_{z_1},\bar\partial_{z_3}]=3b\,(\e\,\partial_{z_1}\otimes dz_3-\bar\partial_{z_3}\otimes d\bar z_1), \\
[\partial_{z_2},\bar\partial_{z_1}]=3b\,(\partial_{z_2}\otimes dz_1-\bar\partial_{z_1}\otimes d\bar z_2), & \quad
[\partial_{z_2},\bar\partial_{z_3}]=3b\,(\e\,\partial_{z_2}\otimes dz_3-\bar\partial_{z_3}\otimes d\bar z_2), \\
[\partial_{z_3},\bar\partial_{z_1}]=3b\,(\partial_{z_3}\otimes dz_1-\e\,\bar\partial_{z_1}\otimes d\bar z_3),\!\!\! & \quad
[\partial_{z_3},\bar\partial_{z_2}]=3b\,(\partial_{z_3}\otimes dz_2-\e\,\bar\partial_{z_2}\otimes d\bar z_3).
 \end{alignat*}
and the conjugates of these (notice that we study the complexification of the real Lie
algebra, so commutator relations can be written in the real basis of $\m$).

Let us check the Jacobi identity (we can use complex basis for this). If all 3 vectors have $(1,0)$-type,
the only non-trivial relation is
 $$
[\partial_{z_1},[\partial_{z_2},\partial_{z_3}]]+
[\partial_{z_2},[\partial_{z_3},\partial_{z_1}]]+
[\partial_{z_3},[\partial_{z_1},\partial_{z_2}]]=0,
 $$
which is equivalent to the trace-zero condition
 $$
[\partial_{z_1},\bar\partial_{z_1}]+[\partial_{z_2},\bar\partial_{z_2}]
+\e\,[\partial_{z_3},\bar\partial_{z_3}]=0.
 $$

If we use two $(1,0)$-types and one $(0,1)$-type, then there are 3 similar
relations with 3 different indices, like
 $$
[\bar\partial_{z_1},[\partial_{z_2},\partial_{z_3}]]+
[\partial_{z_2},[\partial_{z_3},\bar\partial_{z_1}]]+
[\partial_{z_3},[\bar\partial_{z_1},\partial_{z_2}]]=0+0+0=0
 $$
and 6 similar relations with 2 indices equal and 1 different, like
 $$
[\bar\partial_{z_1},[\partial_{z_1},\partial_{z_2}]]+
[\partial_{z_1},[\partial_{z_2},\bar\partial_{z_1}]]+
[\partial_{z_2},[\bar\partial_{z_1},\partial_{z_1}]]=0.
 $$
This latter writes in details as follows
 $$
a\e\,[\bar\partial_{z_1},\bar\partial_{z_3}]-
3b\,(\partial_{z_2}\otimes dz_1-\bar\partial_{z_1}\otimes d\bar z_2)(\partial_{z_1})+
ib\,(J-3J_1)\partial_{z_2}=-(4b+a^2\e)\partial_{z_2}=0.
 $$
It is the same factor $(4b+a^2\e)$ in all 6 relations, and the other relations are conjugated
to these, so $\g$ has the unique Lie algebra structure iff $4b+a^2\e=0$.
In the non-flat case $(a,b)\neq(0,0)$ we can re-scale the constants to
$a=\pm2$, $b=-\e$, and the choice of sign is not essential.

Thus for any of the two cases $\e=\pm1$ we get a unique solution $a=+2,b=-\e$,
and, exploiting the Levi decomposition, we conclude that the Lie algebra $\g$ is simple.
A straightforward check shows that $\e=+1$ corresponds to the compact real version of the simple
exceptional Lie algebra $\g=\op{Lie}(G_2^c)$, while $\e=-1$ corresponds to the split real version
$\g=\op{Lie}(G_2^n)$.

\medskip

\textsc{Acknowledgment.} I would like to thank D.\,V. Alekseevsky, T. Wilse and H. Winter
for useful discussions on the topics from representation theory.

%\small
\footnotesize
%%%%%%%%%%%%%%%%%%%%%%%%%%%%%%%%%%%%%%%%%%%%%%%%%%%%%%%%%%%%%%%%%%%%%%%%%%%%

 \vspace{-5pt} \hspace{-20pt} {\hbox to 12cm{ \hrulefill }}
\vspace{-1pt}

{\footnotesize \hspace{-10pt} Institute of Mathematics and
Statistics, University of Troms\o, Troms\o\ 90-37, Norway.

\hspace{-10pt} E-mail: \quad boris.kruglikov\verb"@"uit.no} \vspace{-1pt}

\end{document}